\newcounter{thm}[section]
\newtheorem{theor}[thm]{Theorem}
\newtheorem{cor}[thm]{Corollary}
\newtheorem{lem}[thm]{Lemma}
\def\bR{\mathbb{R}}	
\def\bE{\mathbb{E}}	
\def\bP{\mathbb{P}}	
\def\bG{\mathbb{G}}	
\def\bm{\mathbf }	
\begin{document}

\title{Modified Cox regression with current status data}
\author{Laurent Bordes\footnote{University of Pau, France; laurent.bordes@univ-pau.fr.}
\;\;\;\;\;\;
Mar\'ia Carmen Pardo\footnote{Complutense University of Madrid, Spain; mcapardo@mat.ucm.es.}\;\;\;\;\;\;\; Christian Paroissin\footnote{University of Pau, France; christian.paroissin@univ-pau.fr.}
\\ 
\\ 
Valentin Patilea\footnote{CREST-Ensai, France; patilea@ensai.fr. V. Patilea acknowledges support from the research program \emph{New Challenges for New Data} of Fondation du Risque and LCL.
}
\;\;\;\;\;\;
}
\date{\empty}
\maketitle

\begin{abstract}

In survival analysis, the lifetime  under study is not always observed. In certain applications, for some individuals, the value of the lifetime is only known to be smaller or larger than some random duration. This framework represent an extension of standard situations where the lifetime is only left or only right randomly censored. We consider the case where the independent observation units include also some covariates, and we propose  two semiparametric regression models. The new models extend the standard Cox proportional hazard model to the situation of a more complex censoring mechanism. However, like in Cox's model, in both models the nonparametric baseline hazard function still could be expressed as an explicit functional of the distribution of the observations. This allows to define the estimator of the finite-dimensional parameters as the maximum of a likelihood-type criterion which is an explicit function of the data. Given an estimate of the finite-dimensional parameter, the estimation of the baseline cumulative hazard function is straightforward.

\bigskip

\noindent\textbf{Keywords:} asymptotic normality, consistency, hazard function, likelihood

\bigskip \noindent MSC2010:  Primary 62N01, 62N02; secondary 62F12

\end{abstract}

\section{Introduction}

Driven by applications, there is a constant interest in time-to-event analysis to extend the predictive models to situations where the lifetimes of interest suffer from complex censoring mechanisms. Here we consider the case where instead of the lifetime of interest $T,$ one observes independent copies of a finite nonnegative duration $X$ and of a discrete variable $A\in\{0,1,2\}$   such that
\begin{equation}\label{datai}
\left\{
\begin{array}{lll}
X=T & \mbox{si} & A=0, \\
X<T & \mbox{si} & A=1, \\
X\geq T & \mbox{si} & A=2.
\end{array}
\right.
\end{equation}
Depending on the application, the inequality signs in \eqref{datai} could be  strict or not.  
Let us point out that the limit case where the event $\{A=2\}$ (resp. $\{A=1\}$) has zero probability corresponds to the usual random right-censoring (resp. left-censoring) setup, while the case where the probability of the event $\{A=0\}$ is null corresponds to the current status framework.

Let us assume that $T\in[0,\infty]$ and let $\mathbf{Z}\in\mathbb{R}^q$ be a vector of random covariates. All the random variables we consider are defined on some probability space $(\Omega,\mathcal{F},\mathbb{P}).$
Although $X$ takes values only on the real line, we allow a positive probability for the event $\{T=\infty\},$ that is we allow for \emph{cured} individuals (see, for instance Fang \emph{et al.} (2005) and Zheng \emph{et al.} (2006) and the references therein for the applications where infinity lifetimes could occur). Symmetrically, we also allow the zero lifetime to have positive probability, that is a zero-inflated law for $T$ could be taken into account (see Braekers \& Growels (2015) for some motivations).


Let $\bm{\mathcal{Z}}$ be the support of $\bm{Z}.$ The conditional probability distribution  of  $(X,A)$ given $\bm{Z}$ is characterized by
the sub-distributions functions
$$
H_k([0,t]|\bm{z})=\bP(X\leq t, A=k | \bm{Z}=\bm{z}),\quad t\geq 0, \;k\in\{0,1,2\},\; \bm{z}\in \bm{\mathcal{Z}}.
$$
Let $H_k(dt|\bm{z})$ denote the associated measures. Moreover, let 
$$
H_k([0,t])=\bP(X\leq t,A=k )
$$
be the unconditional versions of these sub-distributions. Clearly,
$$
H_k([0,t]) = \mathbb{E}(H_k([0,t]|\bm{Z})),\quad t\geq 0, \;k\in\{0,1,2\}.
$$
The conditional distribution function  of $X$   given $\bm{Z}=\bm z$ is then 
$$H([0,t]|\bm{z}) = \bP(X\leq t | \bm{Z}=\bm{z}) = H_0([0,t]|\bm{z})+H_1([0,t]|\bm{z})+H_2([0,t]|\bm{z}).$$

It is important to understand that, based on the data, one could only identify the conditional sub-distributions $H_k(\cdot|\bm{z})$. For identifying and consistently estimating the conditional law of the lifetime of interest $T$, one should introduce some assumptions on the censoring mechanism. In other words, one has to consider a latent model. Several censoring mechanisms have proposed in the case \emph{without covariates}.  Turnbull (1974) considered two censoring times $L\leq U$ such that the case $\{A=0\}$ (resp. $\{A=1\}$) (resp. $\{A=2\}$) corresponds to the event $\{L\leq T \leq U\}$ and $X=T$ (resp. $\{U\leq T\}$ and $X=U$)
(resp. $\{T\leq L\}$ and $X=L$). Patilea \& Rolin (2006b) relaxed the condition $L\leq U$ and proposed two models that could be easily illustrated using simple electric circuits with three components connected in series and/or parallel. Patilea and Rolin (2006a) extended the standard right-censoring (resp. left-censoring) model by allowing uncensored lifetimes $T$ for which one only knows that are smaller (resp. larger) than the observation $X$. This corresponds, for instance, to the case of a medical study where a disease is detected for a patient, but the onset time could not be determined from medical records, personal information, \emph{etc}, while for other patients with the disease detected the onset time is available.   
The model of Turnbull does not allow to express the law of the lifetime of interest as an explicit function of the sub-distributions $H_k$, as it is the case for the models proposed by Patilea \& Rolin (2006a, 2006b). Thus a numerical algorithm is necessary to compute Turnbull's estimator. It is important to keep in mind that \emph{any} of these latent models could be correct and useful for a specific application. The data \emph{does not allow to check} the validity of the model. 
Turnbull's model, perhaps the most popular model for data structures as we consider here, is not necessarily justified in applications where there is no natural interpretation of the variables $L$ and $U$. 

The aim of this paper is to extend the modeling of data as in equation (\ref{datai}) to the case where some covariates $\bm{Z}$ are available. Kim \emph{et al.} (2010) extended Turnbull's model to the case with covariates using a proportional hazard approach.   Here we consider the extension of the approaches proposed by  Patilea \& Rolin (2006a) imposing the same proportional hazard assumption. More precisely, we propose two novel latent models for observed lifetimes as in \eqref{datai} in the presence of covariates. Both models are well suited for data as in \eqref{datai}, and hence could be used in applications. The decision to use one of them, or the one proposed by Kim \emph{et al.} (2010), could be made \emph{only}  on the basis of additional information on the application. Current status data corresponds to $A\in \{1,2\}$. Right (resp. left) censored data corresponds to $A\in\{0,1\}$ (resp. $A\in\{0,2\}$). This explains the terminology we propose for our models: modified Cox regressions with current status lifetimes.  For each of the new models, we introduce a semiparametric estimator for the finite-dimensional parameters, together with  the corresponding baseline cumulative hazard functions estimators. Our estimators are easy to implement.

The paper is organized as follows. Our semiparametric models are introduced in section \ref{sec:right-ca}. They extend the standard right, respectively left, random censoring proportional hazard models. In section \ref{sec:semipara} we introduce the semiparametric estimators of the covariates coefficients, and the estimators of the cumulative hazard and survival functions. In particular, we provide an estimator for the cure rate and the zero-lifetime probability. The theoretical results are presented  

\section{Censored and current status lifetimes}\label{sec:right-ca}

In our models we follow the idea of Cox's semiparametric proportional hazard model. In both models we are able to express the baseline cumulative hazard function as a functional of distribution of the observations, characterized by the conditional sub-distribu\-tions $H_k(\cdot | \bm z)$ and the law of $\bm Z$, and the coefficients of the covariates. This makes that the coefficients of the covariates could be estimated by maximizing an likelihood-type criterion that is build as an explicit function of the observations. Thus the numerical aspects are very much simplified, compared to the model considered by Kim \emph{et al.} (2010). With at hand the estimate of the finite-dimensional parameters, we could easily build the estimator of the baseline cumulative hazard function. In particular, using the estimate of total mass of  the baseline cumulative hazard, we provide a simple estimate of the conditional cure rate $\mathbb P (T=\infty \mid \bm Z = \bm z)$. Similarly, we could  provide an estimator for the conditional zero-lifetime probability $\mathbb P (T= 0 \mid \bm Z = \bm z)$.
The extension to the case of mixture models, such as considered by Fang \emph{et al.} (2005), where the cure rate or the zero-lifetime could depend on possibly different set of  covariates, is left for future work.  


\subsection{Right-censoring case}\label{sec:right-c}

Let $C\in[0,\infty)$ be a random censoring time and $\Delta$ be a Bernoulli random variable with success probability $p\in(0,1]$. Let $F_T(t|\bm{z})$ and $S_T(t|\bm{z}),$ $t\in[0,\infty],$ be the conditional distribution function and survivor function of $T$ given $\mathbf{Z}=\mathbf{z}$. Similarly,  $F_C(t|\bm{z})$ and $S_C(t|\bm{z}),$ $t\in[0,\infty),$ denote  the distribution function and the survivor function of $C$.
Following Patilea \& Rolin (2006a), the latent model for $(X,A,\bm{Z})$ is  defined by:
$$
\left\{
\begin{array}{lll}
(X,A,\bm{Z})=(T,0,\bm{Z}) & \mbox{if} & 0\leq T\leq C\mbox{ and }\Delta=1, \\
(X,A,\bm{Z})=(C,1,\bm{Z}) & \mbox{if} & 0\leq C < T, \\
(X,A,\bm{Z})=(C,2,\bm{Z}) & \mbox{if} & 0\leq T\leq C\mbox{ and }\Delta=0. \\
\end{array}
\right.
$$
Let us notice that $p=1$ is the classical right-censoring limit case, while $p=0$ would correspond to the pure current status setup. The later limit case is not included in what follows since we assume $p>0.$
In the case $C<T,$ the observed outcome is not be influenced by the value of $\Delta.$

For identification purposes we consider the following assumption.

\vspace{6pt}

\noindent \textbf{A1:} Assume that:

\vspace{-10pt}

\begin{itemize}
\item[] \quad \textbf{a}) conditionally on $\bm{Z}$, the latent variables $T$ and $C$ are independent;

\vspace{-5pt}

\item[] \quad \textbf{b}) $\Delta$ and $(T,C,\bm{Z})$ are independent.
\end{itemize}
The independence assumptions allow to  write
\begin{equation}\label{model_eq1}
\left\{
\begin{array}{ccl}
H_0(dt|\bm{z})  & = & p S_C(t-|\bm{z})F_T(dt|\bm{z}), \\
H_1(dt|\bm{z})  & = & F_C(dt|\bm{z})S_T(t|\bm{z}), \\
H_2(dt|\bm{z})  & = & (1-p) F_C(dt|\bm{z})F_T(t|\bm{z}).
\end{array}
\right.
\end{equation}
The system could be solved for the quantities $p$ and $F_T(dt|\bm{z}).$  First, let us write
$$
H_0([t,\infty)|\bm{z})+pH_1([t,\infty)|\bm{z}) = p S_T(t-|\bm{z})S_C(t-|\bm{z}).
$$
Since $ S_T(t-|\bm{z})S_C(t-|\bm{z}) = H([t,\infty)|\bm{z})$, we deduce 
$$
H_0([t,\infty)|\bm{z}) = p \{H_0([t,\infty)|\bm{z})+H_2([t,\infty)|\bm{z})\}, \quad t\geq 0.
$$
Integrating out the covariate and taking $t=0$ we could derive the simple representation
\begin{equation}\label{inv_p}
p=\frac{H_0([0,\infty))}{H_0([0,\infty))+H_2([0,\infty))}=\frac{\mathbb{P}(\Delta = 1,  T\leq C)}{\mathbb{P}(T\leq C)}.
\end{equation}
Let us point out that one could replace the condition \textbf{A1b}) by the weaker condition that $\Delta$ and $(T,C)$ are independent given $\mathbf{Z}$ and still write the equations (\ref{model_eq1}) with $p$ replaced by some function of the covariates $p(\bm{Z}).$ In this case one would derive the conditional version of the representation (\ref{inv_p}), but then the estimation of $p(\bm{Z})$ would require the estimation of the conditional versions of $H_0$ and $H_2.$
For the sake of a simpler setup we suppose that $p$ does not depend on the covariates.

Next, we solve (\ref{model_eq1}) for the conditional distribution of $T.$
For this purpose we follow a proportional hazards model approach and we suppose that the risk function of $T$ given $\bm{Z}=\bm{z}$ could be written as
\begin{equation}\label{haz_rate}
\lambda(t|\bm{z})=\lambda(t)\exp(\bm{\beta}^\top\bm{z}),\quad \forall t>0, \;\forall \bm{z}\in \bm{\mathcal{Z}},
\end{equation}
where $\lambda(\cdot)$ is some unknown baseline hazard function and $\beta$ is a vector of unknown regression parameters. (Herein the vectors are matrix columns and $\beta^\top$ denotes the transposed of $\beta.$)
With this assumption, for each $\bm{z}\in \bm{\mathcal{Z}}$ and $t\geq 0,$ we could write
\begin{eqnarray*}
H_0(dt|\bm{z})&=& pF_T(dt|\bm{z})S_C(t-|\bm{z})\\
&=& \frac{F_T(dt|\bm{z})}{S_T(t-|\bm{z})} pS_T(t-|\bm{z})S_C(t-|\bm{z})\\
&=& \lambda(t)\exp(\bm{\beta}^\top\bm{z}) pS_T(t-|\bm{z})S_C(t-|\bm{z})dt\\
&=&\lambda(t)\exp(\bm{\beta}^\top\bm{z}) \left\{H_0([t,\infty)|\bm{z})+pH_1([t,\infty)|\bm{z}) \right\} dt.
\end{eqnarray*}
Hence,
$$
H_0(dt) = \mathbb{E}\{H_0(dt|\bm{Z})\} = \mathbb{E}\{\exp(\bm{\beta}^\top\bm{Z}) \left(H_0([t,\infty)|\bm{Z})+pH_1([t,\infty)|\bm{Z}) \right)\}\lambda(t)dt.
$$
Moreover,
$$
\bE\left\{\exp(\bm{\beta}^\top\bm{Z}) H_k([t,\infty)|\bm Z)\right\} =  \bE\left\{\exp(\bm{\beta}^\top\bm{Z})\mathbf{1}(X\geq t,A=k)\right\},\quad \forall t\geq 0, k=0,1.
$$
As a consequence, for any $t$ such that $\bE\left\{\exp(\bm{\beta}^\top\bm{Z}) [H_0([t,\infty)|\bm Z)+H_1([t,\infty)|\bm Z)]\right\}>0$,
\begin{equation}\label{Lam}
\lambda(t) dt =  \frac{H_0(dt) }{\bE\left\{\exp(\bm{\beta}^\top\bm{Z}) [\mathbf{1}(X\geq t,A=0) +p \mathbf{1}(X\geq t,A=1)  ]\right\}}.
\end{equation}
Thus, the baseline cumulative hazard function $\Lambda(t) = \int_{[0,t]} \lambda (s) ds$ could be expressed as a functional of the observed variables and the finite-dimensional parameters of the model~:
\begin{equation}\label{cum_haz}
\Lambda(t)  =\Lambda(t;p,\beta)  =  \int_{[0,t]} \frac{H_0(ds) }{\bE\left\{\exp(\bm{\beta}^\top\bm{Z}) [\mathbf{1}(X\geq s,A=0) +p\mathbf{1}(X\geq s,A=1) ]\right\}}.
\end{equation}
The conditional  survival function of the lifetime of interest can be expressed as
$$
S_T(t\mid \bm{z}) = \prod_{s\in(0,t]} \left(1 - \exp(\beta^\top\bm{z}) \Lambda (ds)\right).
$$
Herein, the notation $\prod_{s\in I }$ means the product-integral over the interval $I$, as formally defined in Gill \& Johansen (1990). In particular, the conditional cure probability can be expressed as
$$
 S_T(\infty\mid \bm{z}) = \prod_{s\in(0,\infty)} \left(1 - \exp( \beta^\top\bm{z}) \Lambda (ds)\right).
$$

\subsection{Left-censoring case}\label{sec:left-c}

Let $C \in ( 0, \infty ) $ be a random censoring time and $\Delta$ be a Bernoulli random variable with success probability $p\in(0,1]$. 
In this case the latent model for  $(X,A,\bm{Z})$ is defined by:
$$
\left\{
\begin{array}{lll}
(X,A,\bm{Z})=(T,0,\bm{Z}) & \mbox{if} & 0< C\leq T\mbox{ and }\Delta=1, \\
(X,A,\bm{Z})=(C,1,\bm{Z}) & \mbox{if} & 0< C \leq T, \mbox{ and }\Delta=0\\
(X,A,\bm{Z})=(C,2,\bm{Z}) & \mbox{if} & 0\leq T< C. \\
\end{array}
\right.
$$
The case $p=1$ corresponds to the classical left-censored data situation. Consider the assumptions  \textbf{A1a}) and  \textbf{A1b}). 
Then we can  write
\begin{equation}\label{model_eq1b}
\left\{
\begin{array}{ccl}
H_0(dt|\bm{z})  & = & p F_C(t|\bm{z})F_T(dt|\bm{z}), \\
H_1(dt|\bm{z})  & = & (1-p) F_C(dt|\bm{z})S_T(t- |\bm{z}), \\
H_2(dt|\bm{z})  & = & F_C(dt|\bm{z})F_T(t-|\bm{z}).
\end{array}
\right.
\end{equation}
This system also could  be solved for the quantities $p$ and $F_T(dt|\bm{z}).$  First, combining the first and the third equation, deduce
$$
H_0([0,t]|\bm{z})+pH_2([0,t]|\bm{z}) = p F_T(t|\bm{z})F_C(t|\bm{z}),
$$
so that
\begin{equation*}
p=\frac{H_0([0,\infty))}{H_0([0,\infty))+H_1([0,\infty))}.
\end{equation*}
Moreover, for each $\bm{z}\in \bm{\mathcal{Z}}$ and $t\geq 0,$ we could write
\begin{eqnarray*}
H_0(dt|\bm{z})&=& pF_T(dt|\bm{z})F_C(t|\bm{z})\\
&=& \frac{F_T(dt|\bm{z})}{F_T(t|\bm{z})} pF_T(t|\bm{z})F_C(t|\bm{z})\\
&=& R( dt | \bm{z} ) \left\{H_0([0,t]|\bm{z})+pH_2([0,t]|\bm{z}) \right\},
\end{eqnarray*}
where 
$$
R( dt | \bm{z} ) =   \frac{F_T(dt|\bm{z})}{F_T(t|\bm{z})}
$$
is the conditional reverse hazard measure. The quantity $R( dt | \bm{z} )$ could be interpreted as the conditional probability that the event occurs in the interval $[t-dt, t]$, given that the event occurs no later than $t$. 
This measure has the property 
$$
F_T(t|\bm{z}) =\prod_{s\in(t,\infty)} \left(1 - R( ds | \bm{z} ) \right)
, \qquad \forall t\geq 0.
$$
In particular,
$$
F_T(0|\bm{z}) =  \prod_{s\in(0,\infty)} \left(1 - R( ds | \bm{z} ) \right).
$$

Inspired by the  proportional hazards approach, let us consider that the conditional reverse hazard function of $T$ given $\bm{Z}=\bm{z}$ could be written as
\begin{equation}\label{haz_rate_r}
r(t|\bm{z})=r(t)\exp(\bm{\beta}^\top\bm{z}),\quad \forall t>0, \;\forall \bm{z}\in \bm{\mathcal{Z}},
\end{equation}
where $r(\cdot)$ is some unknown baseline reverse hazard function and $\beta$ is a vector of unknown regression parameters.

Similar to the right-censoring case, one can deduce
\begin{equation}\label{Lam_r}
r(t)dt  =  \frac{H_0(dt) }{\bE\left\{\exp(\bm{\beta}^\top\bm{Z}) [\mathbf{1}(X\leq t,A=0) +p \mathbf{1}(X\leq t,A=2)  ]\right\}},
\end{equation}
and the baseline cumulative reverse hazard is obtained as $R(t)=\int_{(t,\infty)} r(s)ds$.


\section{Semiparametric likelihood estimation}\label{sec:semipara}
\setcounter{equation}{0}

Let $(X_i,A_i,\bm{Z}_i),$ $1\leq i\leq n$, denote the observations that are independent copies of  $(X,A,\bm{Z})\in[0,\infty)\times \{0,1,2\}\times \bm{\mathcal{Z}}.$ In the following, we consider $\bm{\mathcal{Z}}= \mathbb{R}^{q}$  with $q$ some  positive integer. 
With observations of the covariates and of lifetimes as in \eqref{datai}, a natural likelihood-type criterion is the one considered by Kim \emph{et al.} (2010)~:
\begin{multline}\label{kim_lik}
L_n(\beta,\Lambda) = \prod_{i=1}^n \left\{\exp(\bm\beta^\top\bm Z_i)
\lambda(X_i)
\exp\left(-\exp(\bm\beta^\top\bm Z_i)\int 
\mathbf{1}(X_i > t) \Lambda(dt)\right)
\right\}^{\textbf{1}(A_i=0)}
\\
\times \left\{\exp\left(-\exp(\bm\beta^\top\bm Z_i)\int \mathbf{1}(X_i \geq  t)
\Lambda(dt)\right)\right\}^{\textbf{1}(A_i=1)}
\\
\times \left\{1-\exp\left(-\exp(\bm\beta^\top\bm Z_i)\int \mathbf{1}(X_i \geq t)
\Lambda(dt)\right)\right\}^{\textbf{1}(A_i=2)}.
\end{multline}
In this criterion, the factors involving the distribution of $(C,\bm Z^\top)^\top$ are dropped, as they are supposed uninformative.  

To write the likelihood-type criterion $L_n(\beta,\Lambda) $, we only used a hazard rate as in \eqref{haz_rate}, without specifying any censoring mechanism or latent model. Alternatively, one could write the likelihood in terms of the cumulative reverse hazard $R(\cdot)$ we defined in section \ref{sec:left-c}, using only the assumption \eqref{haz_rate_r}. The two criteria are equivalent and would be valid for the type of data we consider. Next, one could follow the profiling idea. In the case where $\mathbb P (A=2) = 0$ this leads to Cox's  partial likelihood with right-censored data. See Murphy \& van der Vaart (2000).
A similar situation, Cox's partial likelihood with left-censored data, occurs when  $\mathbb P (A=1) = 0$. Unfortunately, given a  value $\beta$, the maximization with respect to $\Lambda (\cdot)$ (or $R(\cdot)$) of $L_n(\beta,\Lambda)$ does not have a nondegenerate, explicit solution when both $\mathbb P (A=1)$ and $ \mathbb P (A=2) $ are positive. See  Kim \emph{et al.} (2010), the Remark on page 1341. A possible solution, proposed by Kim \emph{et al.}, would be to consider a numerical approximation. Here we propose an alternative, more convenient and sound route. To estimate the parameters of interest, one has  to consider a model for the censoring mechanism. In the model considered by Kim \emph{et al.} (2010), there is no way to connect the infinite-dimensional parameter $\Lambda$ (or $R(\cdot)$) to the quantities that could be easily estimated from the data, such as $H_0(\cdot)$. This makes the profiling approach  complicated. The profiling approach is very appealing in the standard right-censoring (resp. left-censoring) case because there $\Lambda$ could be easily expressed in terms of $H_0(\cdot)$, $H_0(\cdot \mid \bm  z)$ and $H_1(\cdot \mid \bm  z)$ (resp. $H_2(\cdot \mid \bm  z)$).

In the two models we propose, the relationship between quantities that could be estimated by sample means from the data  and the infinite-dimensional parameter $\Lambda$ (or $R(\cdot)$) is explicit and this allows us to build a user-friendly approximated likelihood. These models does not only make the optimization of the likelihood-type criteria simpler. First of all, they induce censoring mechanisms that make sense in some applications.  See Patilea \& Rolin (2006a) for a discussion.


\subsection{The right-censoring and current status data case}\label{sec:profile_right}
The parameters of our first model are  $\theta=(p,\beta^\top)^\top\in(0,1]\times B \subset \mathbb{R}^{q+1}$ and the hazard function $\Lambda(\cdot)$. Let $\theta_0=(p_0,\beta_0^\top)^\top$ and $\Lambda_0(\cdot) $ denote the true values of the parameters. Using the notation from equation (\ref{cum_haz}) we can also write $\Lambda_0 (t) = \Lambda(t;\theta_0)$.

In view of equation (\ref{inv_p}) let us consider
$$
\widehat p=\frac{\sum_{i=1}^n \textbf{1}(A_i=0)}{\sum_{i=1}^n \textbf{1} (A_i\neq 1)}
$$
as estimator of $p_0.$
For estimating $\beta_0$ we shall use a partial likelihood approach. With at hand an estimate of $\beta_0$, we will use an empirical version of equation (\ref{cum_haz}) and build an estimate of $\Lambda_0(\cdot)$. For these purposes let us define empirical quantities
$$
N_{ki}(t)=\mathbf{1}(X_i\leq t,A_i=k), \qquad 1\leq i\leq n, \;k\in\{0,2\},
$$ 
$$
N_{n,0}(t)=\frac{1}{n}\sum_{i=1}^n\mathbf{1}(X_i\leq t,A_i=0)=\frac{1}{n}\sum_{i=1}^n N_{0i}(t).
$$
For a column vector $c$, $c ^{\otimes 0} =1,$ $c ^{\otimes 1} =c$ and $c ^{\otimes 2} =cc^\top.$
Let
$$
S_{n,k}^{(l)}(t;\bm{\beta})=\frac{1}{n}\sum_{i=1}^n \exp(\bm{\beta}^\top\bm{Z}_i)\bm{Z}_i^{\otimes l}\mathbf{1}(X_i\geq t,A_i=k),\qquad l=0,1,\quad k\in\{0,1,2\},
$$
and 
\begin{equation}\label{eqE}
E^{(l)}_n(t;\theta) = E^{(l)}_n(t;p, \beta)=S_{n,0}^{(l)} (t;\bm \beta) + p S_{n,1}^{(l)} (t;\bm \beta).
\end{equation}
Consider
$$
\Lambda_n(t;\theta) = \Lambda_n(t;p,\bm\beta)=\int_{[0,t]} \frac{N_{n,0}(ds)}{S_{n,0}^{(0)}(s;\bm \beta)+pS_{n,1}^{(0)}(s;\bm \beta)} = \int_{[0,t]} \frac{N_{n,0}(ds)}{E^{(0)}_n(s;\theta)}
$$
as the empirical version of the cumulative hazard function $\Lambda(t)$, as defined in \eqref{cum_haz}.


Using these empirical quantities, and recalling that $\mathbb{P}(T=C)=0$,  we can write the following approximation of the criterion defined in \eqref{kim_lik}~:
\begin{multline*}
\prod_{i=1}^n \left\{\prod_{t\in [0,\tau]} \left[\exp(\bm\beta^\top\bm Z_i)
\Lambda_n(t;\theta)\right]^{N_{0i}(dt)}
\left[\!1-\exp\left(-\int_{[0,X_i]}\exp(\bm\beta^\top\bm Z_i)
\Lambda_n(ds;\theta)\right)\right]^{N_{2i}(dt)}\right\}\\
\times\exp\left(\!-\int_{[0,\tau]} 
\{S^{(0)}_0(t;\beta)+S^{(0)}_1(t;\beta)\}\Lambda_n(dt;\theta)\right),
\end{multline*}
where $\tau\in(0,\infty)$ is some threshold that prevents from dividing by zero, it will be specified below.
Hence, let us define the approximate log-likelihood function
\begin{eqnarray*}
\ell_n(p,\bm \beta;\tau)
&=&\frac{1}{n}\sum_{i=1}^n  D_{0i}^\tau \left(\bm\beta^\top\bm Z_i-\log\left(E^{(0)}(X_i;p,\bm \beta)\right)\right)\\
&+& \frac{1}{n}\sum_{i=1}^n D_{2i}^\tau \log\left(1-\exp\left( -\int_{[0,X_i]}\frac{\exp(\bm{\beta}^\top\bm{Z}_i)}{E^{(0)}(s;p,\bm \beta)}N_{n,0}(ds) \right)\right)\\ 
&-&\int_{[0,\tau]} \frac{E^{(0)}(t;1,\bm \beta)}{E^{(0)}(t;p,\bm \beta)}N_{n,0}(dt),
\end{eqnarray*}
and $D_{ki}^\tau =\mathbf{1}(X_i\leq \tau, A_i=k),$ $k\in\{0,2\}.$
The regression parameter $\beta$ is then estimated by
$$
\widehat \beta = \arg\max_{\beta\in B} \ell_n(\widehat p,\bm \beta;\tau),
$$
where $B\subset\mathbb{R}^q$ is a set of parameters and $\tau$ is fixed by the statistician. For  theoretical results, one needs conditions allowing to control for small values of $H_0([\tau,\infty))+ pH_1([\tau,\infty)).$ This is technical condition that is usually ignored in practice where one would simply take $\tau$ equal to the largest uncensored observation.
Next, the cumulative hazard function is estimated by
$$
\widehat \Lambda (t) = \Lambda_n (t;\widehat p, \widehat \beta)
$$
and the conditional  survival function of the lifetime of interest is estimated by
$$
\widehat S_T(t\mid \bm{z}) = \prod_{s\in(0,t]} \left(1 - \exp(\widehat \beta^\top\bm{z}) \widehat \Lambda (ds)\right)  
,\qquad t < \tau.
$$
The conditional cure probability $\mathbb{P}(T=\infty\mid \bm{Z}= \bm{z})$ is then estimated by
$$
\widehat S_T(\infty\mid \bm{z}) = \widehat S_T(\tau \mid \bm{z}) = \prod_{s\in(0,\tau]} \left(1 - \exp(\widehat \beta^\top\bm{z}) \widehat \Lambda (ds)\right) . 
$$

\subsection{The left-censoring and current status data case}

In the case of the model for left-censored and current status data
%
the estimate of $p_0$ is 
$$
\widehat p=\frac{\sum_{i=1}^n \textbf{1}(A_i=0)}{\sum_{i=1}^n \textbf{1} (A_i\neq 2)}.
$$
Next, using  the same notation as above,  let us define 
$$
F_{n,k}^{(l)}(t;\bm{\beta})=\frac{1}{n}\sum_{i=1}^n \exp(\bm{\beta}^\top\bm{Z}_i)\bm{Z}_i^{\otimes l}\mathbf{1}(X_i\leq t,A_i=k),\qquad l=0,1, \quad k\in\{0,1,2\}.
$$
Let us denote
$$
L^{(l)}_n(t;p, \beta)=F_{n,0}^{(l)} (t;\bm \beta)+p F_{n,2}^{(l)} (t;\bm \beta)
$$
and, for any $t$ such that  $L^{(0)}(t;p, \beta)>0$, consider
$$
R_n(dt;p,\beta )=
\frac{N_{n,0}(dt)}{L^{(0)}_n(t;p,\bm \beta)}. 
$$
Let us fix some (small) value $\varrho$ such that $H_0([0,\varrho])+H_2([0,\varrho]) >0$ and, similarly to the construction presented in section \ref{sec:profile_right}, define the approximated log-likelihood function
\begin{eqnarray*}
\ell_n(p,\bm \beta; \varrho)
&=&\frac{1}{n}\sum_{i=1}^n  D_{0i}^\varrho \left(\bm\beta^\top\bm Z_i-\log\left(L^{(0)}_n(X_i;p,\bm \beta)\right)\right)\\
&+& \frac{1}{n}\sum_{i=1}^n D_{1i}^\varrho \log\left(1-\exp\left( -\int_{[X_i,\infty)}\frac{\exp(\bm{\beta}^\top\bm{Z}_i)}{L^{(0)}_n(s;p,\bm \beta)}N_{n,0}(ds) \right)\right)\\
&-&\int_{[\varrho,\infty)}\frac{L^{(0)}_n(t;1,\bm \beta)}{L^{(0)}_n(t;p,\bm \beta)}N_{n,0}(dt),
\end{eqnarray*}
where $D_{ki}^\varrho =\mathbf{1}(X_i\geq \varrho, A_i=k),$ $k\in\{0,1\}.$
The regression parameter $\beta$ is then estimated by
$$
\widehat \beta = \arg\max_{\beta\in B} \ell_n(\widehat p,\bm \beta;\varrho),
$$
where $B\subset\mathbb{R}^q$ is a set of parameters and $\varrho$ is fixed by the statistician.  Like in the previous model, imposing a bound $\varrho$, here it should be a lower one, is a technical condition usually ignored in applications.
Next, the conditional  distribution function of the lifetime of interest is estimated by
$$
\widehat F_T(t\mid \bm{z}) =  \prod_{(t,\infty)}  \left(1 - \exp(\widehat \beta^\top\bm{z})  R_ n (ds;\widehat p, \widehat \beta )\right),\qquad t \geq  \varrho.
$$
The zero lifetime conditional probability $\mathbb{P}(T= 0 \mid \bm{Z}= \bm{z})$ is then estimated by
$$
\widehat F_T(0\mid \bm{z}) = \widehat F_T(\varrho \mid \bm{z}) 
$$
and the baseline cumulative reverse hazard is estimated by $\widehat R(t)=\int_{(t,\infty)} R_ n (ds;\widehat p, \widehat \beta)$.

\section{Asymptotic results}\label{sec:as_res}
\setcounter{equation}{0}

For the asymptotic results we only consider the investigation of the right-censored and current status data case. For the left-censored  and current status data case the results are similar and could be obtained after obvious modifications. 

Let $P$ be the probability distribution of $(X,A,\bm Z)$ and for any integrable function $f$ let $Pf=\mathbb{E}[f(X,A,\bm Z)].$ Let
$$
\mathbb{P}_n = \frac{1}{n} \sum_{i=1}^n \delta_{(X_i,A_i,\bm Z _i)}
$$
be the empirical distribution function and 
$
\mathbb{G}_n  = \sqrt{n} \left( \mathbb{P}_n - P\right). 
$

\medskip

Let us introduce the following additional assumptions.

\smallskip

\begin{itemize}
\item[\textbf{A2:}] The vector of covariates $\bm Z$ lies in $\mathbb{R}^{q}$, with $q\geq 1$ fixed, has a positive definite variance and is bounded, that is $\|\bm Z\|\leq c$ a.s. Moreover, $\bm\beta_0$ is an interior point of the parameter set $B$ that is a compact subset of $\bR^q$, and $p_0\in [\epsilon, 1 - \epsilon] \subset (0,1)$;

\item[\textbf{A3:}] The value  $\tau>0$ is such that
$
 H_0([\tau,\infty)) + H_1([\tau,\infty)) >0.
$
\end{itemize}

For simplicity we rule out the case $p_0=1$ because in this case $\mathbb{P}(A=2)=0$ and  $\widehat p = 1$ a.s., that is we are exactly in the classical PH model under right-censoring. Since $p_0$ is strictly positive, Assumption \textbf{A3}) is equivalent to $H_0([\tau,\infty)) + p_0 H_1([\tau,\infty)) >0.$ Also for simplicity, in the sequel we assume that the lifetime of interest $T$ and the censoring time $C$ are almost surely different. Let us notice that the construction we propose in sections \ref{sec:right-c} and \ref{sec:left-c} adapts to the case where $q$ depends on the sample size, or to the case where $\bm{\mathcal{Z}}$ is an infinite-dimensional space. The study of the properties of the estimators defined in such cases is left for future work.

\medskip

\begin{theor}[Consistency]\label{consist_lem} Let $\widehat \theta = (\widehat p,\widehat \beta^\top  )^\top $. Assume $\mathbb P (T=C)=0$ and Assumptions \textbf{A1}--\textbf{A3} hold true. Then:
\begin{enumerate}
\item $\widehat \theta \to \theta _0$, in probability;
\item $\sup_{t\in[0,\tau]}\left|\widehat{\Lambda}(t)-\Lambda_0(t)\right|\to 0$ in probability.
\end{enumerate}\label{th:consistency}
\end{theor}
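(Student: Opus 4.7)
The argument splits into the finite-dimensional consistency of $\widehat\theta=(\widehat p,\widehat\beta^\top)^\top$ and the uniform consistency of $\widehat\Lambda$ on $[0,\tau]$.

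Consistency of $\widehat p$ is immediate: $\widehat p$ is the ratio of the sample averages of $\mathbf{1}(A_i=0)$ and $\mathbf{1}(A_i\neq 1)$, which by the weak law of large numbers converge in probability to $\mathbb{P}(A=0)$ and $\mathbb{P}(A\neq 1)=\mathbb{P}(A=0)+\mathbb{P}(A=2)$, respectively. Since $p_0<1$ in \textbf{A2} forces $\mathbb{P}(A=2)>0$, the limit of the denominator is strictly positive, and the identification formula (\ref{inv_p}) yields $\widehat p\to p_0$ in probability.

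For $\widehat\beta$ I would apply a Wald-type M-estimator consistency argument (e.g.\ Theorem 5.7 in van der Vaart's \emph{Asymptotic Statistics}), requiring (i) compactness of $B$ (already \textbf{A2}), (ii) uniform convergence in probability $\sup_{\beta\in B}|\ell_n(\widehat p,\beta;\tau)-\ell(p_0,\beta;\tau)|\to 0$ for a deterministic limit, and (iii) that $\beta_0$ be the well-separated unique maximizer of $\ell(p_0,\cdot;\tau)$. For (ii) I would decompose
$$
\ell_n(\widehat p,\beta;\tau)-\ell(p_0,\beta;\tau)=\bigl[\ell_n(\widehat p,\beta;\tau)-\ell_n(p_0,\beta;\tau)\bigr]+\bigl[\ell_n(p_0,\beta;\tau)-\ell(p_0,\beta;\tau)\bigr].
$$
Under \textbf{A2}--\textbf{A3}, on $B\times[\epsilon,1-\epsilon]\times[0,\tau]$ the processes $S_{n,k}^{(l)}$ and $E_n^{(0)}$ are uniformly bounded above, and for $t\in[0,\tau]$ uniformly bounded below away from zero, so every term entering $\ell_n$ is a smooth function of sample averages over bounded VC-subgraph classes. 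The uniform law of large numbers over such classes handles the second bracket. The first bracket is controlled by the mean value theorem in $p$ (the partial derivative of $\ell_n$ with respect to $p$ being uniformly bounded on $B\times[\epsilon,1-\epsilon]$) and the consistency of $\widehat p$.

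The core obstacle is step (iii), the identification. I would compute the limit $\ell(p_0,\beta;\tau)$ explicitly by substituting the structural equations (\ref{model_eq1}) and the representation (\ref{cum_haz}), then rewrite it (up to a constant in $\beta$) as the negative of an expected Kullback--Leibler-type divergence between the true conditional law of $(X,A)$ given $\bm Z$ and the candidate law with parameters $(p_0,\beta)$ and cumulative hazard $\Lambda(\cdot;p_0,\beta)$. The proportional-hazards assumption (\ref{haz_rate}) combined with the positive-definiteness of $\mathrm{Var}(\bm Z)$ in \textbf{A2} then forces the divergence to vanish only at $\beta=\beta_0$. This must be done with care because $\ell_n$ is not the exact Cox profile log-likelihood but a tailored approximation that substitutes $\Lambda$ by the empirical functional $\Lambda_n$; the integral term in $\ell_n$ precisely compensates for this substitution, so that the population limit still identifies $\beta_0$.

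Finally, for the cumulative hazard I would write
$$
\widehat\Lambda(t)-\Lambda_0(t)=\int_{[0,t]}\frac{N_{n,0}(ds)}{E_n^{(0)}(s;\widehat\theta)}-\int_{[0,t]}\frac{H_0(ds)}{E^{(0)}(s;\theta_0)},
$$
and combine the uniform Glivenko--Cantelli limits $\sup_{s\le\tau}|N_{n,0}([0,s])-H_0([0,s])|\to 0$ and $\sup_{s\le\tau,\,\theta\in V}|E_n^{(0)}(s;\theta)-E^{(0)}(s;\theta)|\to 0$ in probability, for $V$ a small neighborhood of $\theta_0$, with the lower bound on $E^{(0)}(\cdot;\theta_0)$ over $[0,\tau]$ implied by \textbf{A3}. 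Using part~1 to place $\widehat\theta$ in $V$ eventually, integration by parts gives the uniform convergence asserted in part~2.
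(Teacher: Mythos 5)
Your proposal follows essentially the same route as the paper: consistency of $\widehat p$ by the law of large numbers, consistency of $\widehat\beta$ via van der Vaart's Theorem 5.7 (identification through a Kullback--Leibler-type inequality plus uniform convergence of $\ell_n$ over bounded VC classes), and uniform consistency of $\widehat\Lambda$ from the same decomposition of $\Lambda_n-\Lambda$ with the lower bound on $e^{(0)}$ supplied by \textbf{A3}. The only cosmetic differences are that the paper proves uniform convergence and identification jointly over $(p,\beta)\in[\epsilon,1-\epsilon]\times B$ rather than plugging in $\widehat p$ via a mean-value argument, and states the identification lemma at the level of the conditional multinomial law of $A$ given $(X,\bm Z)$; your sketch of that identification step is at a level of detail comparable to the paper's own.
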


\medskip

\begin{theor}[I.i.d. representation]\label{iid_rep} Under the assumptions of Theorem \ref{th:consistency} we have:
$$
\sqrt{n}\left(\begin{array}{c} \widehat p-p_0\\\widehat{\bm \beta}-\bm\beta_0\\ \widehat{\Lambda}(t)-\Lambda_0(t)\end{array}\right)= \bG_n\tilde\ell_{t;p_0,\bm\beta_0,\Lambda_0}+R_n(t), \qquad t\in[0,\tau],
$$
where
$
\ell_{s;p_0,\bm\beta_0,\Lambda_0}
$
is some squares integrable function 
and $R_n(t)$ is a reminder term  that is uniformly negligible, that is $\sup_{t\in[0,\tau]} |R_n(t)| \!= o_{\mathbb{P}}(1).$
\label{th_iid}
\end{theor}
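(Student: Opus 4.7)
The strategy is to obtain a linear expansion for each of the three components separately through Taylor expansion and empirical process arguments, and then to aggregate them into the joint i.i.d.\ representation. For $\widehat p$, a direct delta method applied to the smooth ratio of two Bernoulli sample means immediately yields
$$
\sqrt{n}(\widehat p-p_0)=\bG_n\phi_p+o_{\bP}(1),\qquad \phi_p(X,A,\bm Z)=\frac{(1-p_0)\mathbf{1}(A=0)-p_0\,\mathbf{1}(A=2)}{\bP\{\mathbf{1}(A\neq 1)\}},
$$
which becomes the first coordinate of $\tilde\ell_{t;p_0,\bm\beta_0,\Lambda_0}$.

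For $\widehat\beta$, Theorem \ref{consist_lem} combined with the interior-point assumption in \textbf{A2} ensures that with probability tending to one the first-order condition $\partial_\beta \ell_n(\widehat p,\widehat\beta;\tau)=0$ holds. A bivariate Taylor expansion around $(p_0,\beta_0)$ gives
$$
0=\partial_\beta\ell_n(p_0,\beta_0;\tau)+I_n(\widehat\beta-\beta_0)+J_n(\widehat p-p_0),
$$
for second-derivative matrices $I_n,J_n$ evaluated at intermediate points; a uniform law of large numbers over the Donsker classes described below yields $I_n\to I_0$ and $J_n\to J_0$ in probability, with $I_0$ invertible by the identifiability ensured by \textbf{A1}--\textbf{A2}. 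Solving and then further expanding every ratio $1/E_n^{(0)}(s;p_0,\beta_0)$ appearing in the explicit form of $\sqrt{n}\,\partial_\beta \ell_n(p_0,\beta_0;\tau)$ to first order around the population analogue $E^{(0)}(s;p_0,\beta_0)$ expresses this score itself as $\bG_n g$ for an explicit square-integrable $g$, up to $o_{\bP}(1)$. Substituting the expansion for $\widehat p-p_0$ produces the second block of $\tilde\ell_{t;p_0,\bm\beta_0,\Lambda_0}$.

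For $\widehat\Lambda(t)-\Lambda_0(t)$, I start from the decomposition
$$
\widehat\Lambda(t)-\Lambda_0(t)=\int_{[0,t]}\frac{(N_{n,0}-H_0)(ds)}{E^{(0)}(s;\theta_0)}+\int_{[0,t]}\left\{\frac{1}{E_n^{(0)}(s;\widehat\theta)}-\frac{1}{E^{(0)}(s;\theta_0)}\right\}N_{n,0}(ds).
$$
The first integral is already $\bG_n$ applied to $(X,A,\bm Z)\mapsto\mathbf{1}(X\leq t,A=0)/E^{(0)}(X;\theta_0)$, a square-integrable function. In the second, I expand the bracketed quantity to first order around $(E^{(0)}(\cdot;\theta_0),\theta_0)$, producing two linear contributions: one linear in $\widehat\theta-\theta_0$ (which, by the previous steps, is itself a linear functional of $\bG_n$), and one linear in $E_n^{(0)}(\cdot;\theta_0)-E^{(0)}(\cdot;\theta_0)$, which is another centered empirical process. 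Replacing $N_{n,0}$ by $H_0$ in these first-order pieces introduces only a $o_{\bP}(1)$ error, and collecting all terms yields the third coordinate of $\tilde\ell_{t;p_0,\bm\beta_0,\Lambda_0}$.

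The \emph{main obstacle} is the uniform negligibility of $R_n(t)$ over $t\in[0,\tau]$, which requires two ingredients. First, the classes $\{(X,A,\bm Z)\mapsto\exp(\beta^\top\bm Z)\bm Z^{\otimes\ell}\mathbf{1}(X\geq s,A=k):s\in[0,\tau],\,\beta\in B,\,k\in\{0,1\},\,\ell\in\{0,1\}\}$ and their cumulative-distribution counterparts must be shown to be $P$-Donsker; this follows from the boundedness of $\bm Z$ in \textbf{A2} and standard VC-subgraph or bracketing arguments. Second, \textbf{A3} together with the consistency of $\widehat\theta$ imply that $\inf_{s\in[0,\tau]}E_n^{(0)}(s;\widehat\theta)$ is bounded away from zero with probability tending to one, so that every first-order Taylor remainder involving $1/E_n^{(0)}$ is uniformly in $s$ of order $O_{\bP}(\|\widehat\theta-\theta_0\|^2+\|E_n^{(0)}(\cdot;\theta_0)-E^{(0)}(\cdot;\theta_0)\|_\infty^2)=o_{\bP}(n^{-1/2})$. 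Uniformity of the final remainder in $t$ then follows from stochastic equicontinuity of the Donsker classes and the continuity in $t$ of the limiting integrands.
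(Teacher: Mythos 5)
Your proposal is correct in outline and follows essentially the same architecture as the paper's (itself only sketched) proof: a delta-method linearization of $\widehat p$ as a smooth ratio of the empirical masses of $\{A=0\}$ and $\{A=2\}$, a Taylor expansion of the estimating equation $\partial_\beta\ell_n(\widehat p,\widehat\beta;\tau)=0$ jointly in $(\beta,p)$ with an invertibility assumption on the limiting Hessian, a plug-in expansion of $\Lambda_n(t;\widehat\theta)$ around $\theta_0$, and Donsker/equicontinuity arguments for uniformity in $t$. The differences are in tooling rather than in substance. Where you linearize $1/E_n^{(0)}$ by hand inside $\int_{[0,t]}N_{n,0}(ds)/E_n^{(0)}(s;\theta_0)$, the paper invokes Hadamard differentiability of the map $(A,B)\mapsto\int_{[0,\cdot]}(1/A)\,dB$ on a bounded-variation domain (Kosorok, Sec.~12.2) together with the functional delta method; and where you propose to expand ``every ratio $1/E_n^{(0)}$'' in the score, the paper writes $\sqrt{n}U_n(\theta_0,\tau)=\bG_n h_{\eta_0}+\sqrt{n}Ph_{\eta_n}+\bG_n(h_{\eta_n}-h_{\eta_0})$ and kills the last term by asymptotic equicontinuity over a class indexed by estimated monotone functions (van der Vaart \& Wellner, 2007). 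The one place your recipe is genuinely thinner than the paper's is the $\delta_2(A_i)$ part of the score: there the nonlinearity is not only through $1/E_n^{(0)}$ but through $V_i(\theta)=\exp(\beta^\top\bm Z_i)\int\mathbf{1}(X_j\le X_i)\delta_0(A_j)/E_n^{(0)}(X_j)$ and the composition $v\mapsto e^{-v}/(1-e^{-v})$, i.e.\ a V-statistic whose linearization also requires controlling $\Lambda_n-\Lambda_0$ and $\partial_\beta\Lambda_n-\partial_\beta\Lambda_0$ uniformly (the components $\eta_5,\eta_6$ of the paper's class $\mathcal H$); your listed Donsker classes do not cover this term, and it is precisely the step for which the paper builds its extra machinery. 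With that class added (and the same implicit boundedness of $1/(1-e^{-V_i})$ that the paper also takes for granted), your argument goes through.
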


\medskip

\begin{cor}[CLT] \label{cor_clt}
Under the assumptions of Theorem \ref{th_iid}
$$
\sqrt{n}\left(\begin{array}{c} \widehat p-p_0\\ \widehat{\bm \beta}-\bm\beta_0\\ \widehat{\Lambda}(\cdot)-\Lambda_0(\cdot)\end{array}\right) \leadsto {\cal G}\; \mbox{ in } \; {\bR}^{q+1}\times \ell^\infty([0,\tau]),
$$
where $\cal G$ is a tight, zero-mean Gaussian process with covariance function $$\rho_{\cal G}(s,t)=P\tilde\ell_{s;p_0,\bm\beta_0,\Lambda_0}\tilde\ell_{t;p_0,\bm\beta_0,\Lambda_0}^{\;\top},\qquad 0\leq s,t\leq \tau.$$
\end{cor}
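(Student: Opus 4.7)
The plan is to deduce the corollary as a direct consequence of the i.i.d.\ representation in Theorem \ref{th_iid}, combined with a functional central limit theorem. Since $\sup_{t\in[0,\tau]}|R_n(t)|=o_{\mathbb{P}}(1)$, Slutsky's lemma applied in the product space $\bR^{q+1}\times\ell^\infty([0,\tau])$ reduces the problem to the weak convergence of the empirical process $t\mapsto \bG_n\tilde\ell_{t;p_0,\bm\beta_0,\Lambda_0}$ towards a tight, zero-mean Gaussian process with the stated covariance kernel. By the standard characterization of weak convergence in $\ell^\infty([0,\tau])$, this amounts to establishing (i) convergence of the finite-dimensional distributions and (ii) asymptotic tightness.

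For (i), I would invoke the multivariate CLT: for any finite collection $t_1,\dots,t_k\in[0,\tau]$, the function $\tilde\ell_{t_j;p_0,\bm\beta_0,\Lambda_0}$ is $P$-square-integrable by hypothesis, so $(\bG_n\tilde\ell_{t_j;p_0,\bm\beta_0,\Lambda_0})_{j=1}^k$ converges in distribution to a centered Gaussian vector with covariance matrix $[P\tilde\ell_{t_i}\tilde\ell_{t_j}^{\;\top}]_{i,j}$. This identifies the covariance kernel $\rho_{\cal G}$ and, by taking $t_1=\cdots=t_k$ and projecting on the first $q+1$ coordinates, it also delivers the Euclidean part of the limit. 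For (ii), it suffices to show that the class $\mathcal{F}=\{\tilde\ell_{t;p_0,\bm\beta_0,\Lambda_0}:t\in[0,\tau]\}$ is $P$-Donsker. From the derivation that produces $\tilde\ell_t$ in Theorem \ref{th_iid}, one expects $\tilde\ell_t$ to be a finite linear combination of indicators of the form $\mathbf 1(X\leq t,A=k)$ and $\mathbf 1(X\geq t,A=k)$, each weighted by a bounded function of $(\bm Z,\bm\beta_0)$, plus a deterministic drift obtained by integrating against $\Lambda_0(ds)$ on $[0,\tau]$. The indicator subfamilies are VC-subgraph classes of index one, and the weights are uniformly bounded because Assumption \textbf{A2} bounds $\bm Z$ and because Assumption \textbf{A3} keeps the denominators $E^{(0)}(t;p_0,\bm\beta_0)$ bounded away from zero on $[0,\tau]$. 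Preservation of the Donsker property under products with bounded envelopes and finite linear combinations then gives $\mathcal{F}\in$ Donsker$(P)$.

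The main obstacle I anticipate is verifying the Donsker property uniformly in $t$, i.e.\ controlling that the envelope of $\mathcal{F}$ lies in $L^2(P)$ and that the bracketing or uniform entropy integral is finite; this rests entirely on the quantitative lower bound on $E^{(0)}(\cdot;p_0,\bm\beta_0)$ supplied by \textbf{A3}, which guarantees that the integrands stay uniformly bounded over $[0,\tau]$. Once $\mathcal{F}$ is Donsker, the map $t\mapsto \bG_n\tilde\ell_t$ converges weakly in $\ell^\infty([0,\tau])$ to a tight, centered Gaussian process whose covariance is $P\tilde\ell_s\tilde\ell_t^{\;\top}$; joint convergence with the Euclidean components follows from the Cram\'er--Wold device together with (i). Invoking Slutsky once more to absorb $R_n$ yields the stated joint weak convergence to $\cal G$.
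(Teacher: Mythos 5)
Your argument is correct and follows essentially the same route as the paper: the authors treat the corollary as a direct consequence of the i.i.d.\ representation, with tightness supplied by the Donsker properties of the classes $\mathcal F^{(k,l)}$, $\mathcal F^{(2)}$ and $\mathcal H$ established in the appendix, plus the multivariate CLT for the finite-dimensional distributions and Slutsky for the uniformly negligible remainder. The only imprecision is your description of the integrated terms in $\tilde\ell_t$ as a ``deterministic drift''; they are in fact $\bG_n$ applied to bounded monotone-in-$t$ functions obtained by Fubini, but this does not affect the Donsker argument.
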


\medskip

We could also derive the asymptotic law of the estimator of the survivor function $ S_T(t\mid \bm{z})$ for an arbitrary value $\bm{z}$ in the support of the covariates. The following result is a straightforward  extension of classical results for Cox PH model, see Link (1984).

\medskip

\begin{cor}[CLT for the conditional survivor] \label{cor_clt2}
Under the assumptions of Theorem \ref{th_iid} and for any fixed $\bm z \in\mathcal{Z},$
$$
\sqrt{n}\left(\widehat S_T(\cdot \mid \bm{z}) - S_T(\cdot \mid \bm{z}) \right) \leadsto {\cal S}_{\bm z}\; \mbox{ in } \; \ell^\infty([0,\tau]),
$$
where $\cal S_{\bm z}$ is a  tight, zero-mean Gaussian process.
\end{cor}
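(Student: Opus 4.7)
The plan is to derive the corollary from Corollary \ref{cor_clt} via the functional delta method, applied to the product-integral map. For fixed $\bm z \in \bm{\mathcal Z}$, consider
$$
\Phi_{\bm z}(\bm\beta, \Lambda)(t) = \prod_{s \in (0,t]}\left(1 - \exp(\bm\beta^\top \bm z)\,\Lambda(ds)\right), \qquad t \in [0,\tau],
$$
regarded as a map from $\mathbb R^q \times D[0,\tau]$ into $\ell^\infty([0,\tau])$, where $D[0,\tau]$ denotes the space of right-continuous nondecreasing bounded functions on $[0,\tau]$ equipped with the supremum norm. Then $\widehat S_T(\cdot \mid \bm z) = \Phi_{\bm z}(\widehat{\bm\beta}, \widehat \Lambda)$ and $S_T(\cdot \mid \bm z) = \Phi_{\bm z}(\bm\beta_0, \Lambda_0)$; note that the dependence on $\widehat p$ is fully absorbed into $\widehat \Lambda$, so it suffices to exploit the marginal weak limit of $(\widehat{\bm\beta}, \widehat \Lambda)$ provided by Corollary \ref{cor_clt}.

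I would factorize $\Phi_{\bm z} = \mathcal P \circ \Psi_{\bm z}$, where $\Psi_{\bm z}(\bm\beta, \Lambda) = -\exp(\bm\beta^\top \bm z)\Lambda$ and $\mathcal P(A)(t) = \prod_{s \in (0,t]}(1 + A(ds))$. The map $\Psi_{\bm z}$ is Fr\'echet (hence Hadamard) differentiable at $(\bm\beta_0, \Lambda_0)$, with derivative $d\Psi_{\bm z}(\bm\beta_0, \Lambda_0)[h, H] = -\exp(\bm\beta_0^\top \bm z)[(h^\top \bm z)\Lambda_0 + H]$. The product-integral $\mathcal P$ is Hadamard differentiable on bounded cadlag BV functions by Gill \& Johansen (1990, Theorem 8). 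Under Assumption \textbf{A3} and the hazard rate model \eqref{haz_rate}, $\Lambda_0$ is continuous on $[0,\tau]$ with $\Lambda_0(\tau) < \infty$, so the chain rule for Hadamard derivatives (van der Vaart \& Wellner 1996, Theorem 3.9.3) yields Hadamard differentiability of $\Phi_{\bm z}$ at $(\bm\beta_0, \Lambda_0)$, with
$$
D\Phi_{\bm z}(\bm\beta_0, \Lambda_0)[h, H](t) = -S_T(t \mid \bm z)\exp(\bm\beta_0^\top \bm z)\left[(h^\top \bm z)\Lambda_0(t) + H(t)\right],
$$
the closed form being a consequence of the continuity of $\Lambda_0$.

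Applying the functional delta method (van der Vaart \& Wellner 1996, Theorem 3.9.4) together with Corollary \ref{cor_clt} then gives
$$
\sqrt n \left(\widehat S_T(\cdot \mid \bm z) - S_T(\cdot \mid \bm z)\right) \leadsto \mathcal S_{\bm z} := D\Phi_{\bm z}(\bm\beta_0, \Lambda_0)\left[\mathcal G_{\bm\beta}, \mathcal G_\Lambda\right]
$$
in $\ell^\infty([0,\tau])$, where $(\mathcal G_{\bm\beta}, \mathcal G_\Lambda)$ denotes the appropriate marginal Gaussian limit extracted from the process $\mathcal G$ of Corollary \ref{cor_clt}. Linearity and continuity of the derivative imply that $\mathcal S_{\bm z}$ is a tight, zero-mean Gaussian process in $\ell^\infty([0,\tau])$, which yields the claim.

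The main technical obstacle is the rigorous verification of Hadamard differentiability of the product-integral in the supremum-norm topology on $[0,\tau]$, and the checking that the tangent perturbations produced by Corollary \ref{cor_clt} lie almost surely in the allowed tangent cone (essentially $C[0,\tau]$). Both issues are resolved by noting that $\mathcal G_\Lambda$ inherits continuous sample paths from $\Lambda_0$ through the i.i.d.\ representation of Theorem \ref{th_iid}. Once these measure-theoretic details are in place, the argument is the semiparametric counterpart of the classical Cox PH computation carried out by Link (1984), as announced in the remark preceding the corollary statement.
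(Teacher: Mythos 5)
Your argument is correct and is exactly the route the paper intends: the paper gives no detailed proof of this corollary, merely calling it a ``straightforward extension of classical results for Cox PH model'' (Link, 1984) and relying on the product-integral formalism of Gill \& Johansen (1990), which is precisely the functional delta method applied to $(\widehat{\bm\beta},\widehat\Lambda)\mapsto\prod_{(0,\cdot]}(1-\exp(\widehat{\bm\beta}^\top\bm z)\widehat\Lambda(ds))$ that you carry out, with the correct derivative $-S_T(t\mid\bm z)\exp(\bm\beta_0^\top\bm z)[(h^\top\bm z)\Lambda_0(t)+H(t)]$ owing to the continuity of $\Lambda_0$. Your write-up is in fact more explicit than the paper's.
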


\medskip

Let us now investigate the estimator of the cure rate. Suppose that $H_0(\cdot)$ has a bounded support and let $\tau_{H_0}$ be its right endpoint. Assume that  $H_1([\tau_{H_0},\infty))>0.$ Then in our model we necessarily have $\Lambda ([0,\tau_{H_0}])<\infty$ and
$\inf_{\|\bm z\| \leq c } S_T(\tau_{H_0}\mid \bm z) >0.$ Since one cannot identify the law of $T$ beyond the last uncensored observation, by an usual convention, $S_T(\infty\mid \bm z)=S_T(\tau_{H_0}\mid \bm z).$ These quantities could be estimated by $\widehat S_T(\tau\mid \bm{z}).$ The following corollary is a direct consequence of Corollary \ref{cor_clt2}.  

\medskip

\begin{cor}[CLT for the conditional cure rate] \label{cor_clt3}
Suppose that the assumptions \textbf{A1}, \textbf{A2} hold true. Moreover,
$H_0(\cdot)$ has a bounded support with right endpoint  $\tau_{H_0}<\infty$. Assume that  $H_1([\tau_{H_0},\infty))>0.$ Then
$$
\sqrt{n}\left(\widehat S_T(X_{(n)}^0 \mid \bm{z}) - S_T(\infty\mid \bm{z})\right) \leadsto N(0,V(\bm z)),
$$
where $X_{(n)}^0$ is the largest uncensored observation and
$
V(\bm z) = \mathbb E ({\cal S}_{\bm z}(\tau_{H_0}))
$
with $\cal S_{\bm z}$ from Corollary \ref{cor_clt2}. 
\end{cor}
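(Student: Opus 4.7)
The plan is to reduce the statement to Corollary \ref{cor_clt2} evaluated at the single point $t=\tau_{H_0}$. First I would observe that the assumption $H_1([\tau_{H_0},\infty))>0$ immediately yields $H_0([\tau_{H_0},\infty))+H_1([\tau_{H_0},\infty))>0$, so Assumption \textbf{A3} is satisfied with the choice $\tau=\tau_{H_0}$. In turn, all the hypotheses needed for Theorems \ref{consist_lem}, \ref{iid_rep}, and Corollaries \ref{cor_clt}, \ref{cor_clt2} hold with this value of $\tau$. In particular, the marginal version of Corollary \ref{cor_clt2} at $t=\tau_{H_0}$ gives
$$
\sqrt{n}\bigl(\widehat S_T(\tau_{H_0}\mid \bm z) - S_T(\tau_{H_0}\mid \bm z)\bigr) \leadsto \mathcal{S}_{\bm z}(\tau_{H_0}),
$$
which is centered Gaussian with variance $V(\bm z)=\mathbb{E}[\mathcal{S}_{\bm z}(\tau_{H_0})^2]$. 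By the stated convention, $S_T(\infty\mid\bm z)=S_T(\tau_{H_0}\mid\bm z)$, so the limit already matches the right-hand side in the target statement.

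What remains is to replace $\widehat S_T(\tau_{H_0}\mid\bm z)$ by $\widehat S_T(X_{(n)}^0\mid\bm z)$ in the left-hand side at no cost. The key observation is that
$$
\widehat\Lambda(t)=\int_{[0,t]} \frac{N_{n,0}(ds)}{E_n^{(0)}(s;\widehat\theta)}
$$
is a step function whose jumps occur exclusively at the observed uncensored times, hence $\widehat\Lambda$ is constant on $[X_{(n)}^0,\infty)$. The product integral
$$
\widehat S_T(t\mid\bm z)=\prod_{s\in(0,t]} \bigl(1-\exp(\widehat\beta^\top\bm z)\,\widehat\Lambda(ds)\bigr)
$$
therefore remains constant for $t\geq X_{(n)}^0$. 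Because the support of $H_0$ is contained in $[0,\tau_{H_0}]$, all uncensored observations satisfy $X_i\leq \tau_{H_0}$ almost surely, and thus $X_{(n)}^0\leq \tau_{H_0}$ a.s. Consequently $\widehat S_T(X_{(n)}^0\mid\bm z)=\widehat S_T(\tau_{H_0}\mid\bm z)$ with probability one, and the CLT transfers verbatim.

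There is no serious obstacle: the estimator's flatness beyond the last uncensored jump, combined with the bound $X_{(n)}^0\leq\tau_{H_0}$ a.s., make the proof essentially a one-line consequence of the previous corollary. The only subtle point worth a short verification is that $\Lambda_0([0,\tau_{H_0}])<\infty$ and $\inf_{\|\bm z\|\leq c} S_T(\tau_{H_0}\mid\bm z)>0$, which is what makes the limit Gaussian random variable $\mathcal{S}_{\bm z}(\tau_{H_0})$ well defined; this follows directly from the representation \eqref{cum_haz}, since the denominator there is uniformly bounded below on $[0,\tau_{H_0}]$ by $p_0\,e^{-c\|\bm\beta_0\|}H_1([\tau_{H_0},\infty))>0$, while the numerator integrates to $H_0([0,\tau_{H_0}])\leq 1$.
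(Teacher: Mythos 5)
Your proposal is correct and follows the paper's own route: the paper simply declares the corollary a direct consequence of Corollary \ref{cor_clt2} after noting that $H_1([\tau_{H_0},\infty))>0$ gives Assumption \textbf{A3} with $\tau=\tau_{H_0}$, that $\Lambda_0([0,\tau_{H_0}])<\infty$ and $\inf_{\|\bm z\|\le c}S_T(\tau_{H_0}\mid\bm z)>0$, and that $S_T(\infty\mid\bm z)=S_T(\tau_{H_0}\mid\bm z)$ by convention. Your only additions are welcome ones: the explicit observation that $\widehat\Lambda$ is flat beyond $X_{(n)}^0\le\tau_{H_0}$ a.s.\ (so evaluating at $X_{(n)}^0$ or at $\tau_{H_0}$ is the same), and the correct reading of the variance as $\mathbb E[\mathcal S_{\bm z}(\tau_{H_0})^2]$ rather than the paper's typographical $\mathbb E(\mathcal S_{\bm z}(\tau_{H_0}))$.
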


\medskip

The estimation of the covariance functions 
of the processes $\mathcal G$ and $\mathcal S_{\bm z}$,
and of the variance $V(\bm z)$ is quite difficult. Therefore we propose an alternative route, based on the weighted bootstrap, for estimating the asymptotic law of our estimators. Let us consider  $\tilde\ell_{\cdot; \widehat p,\widehat{\bm\beta} ,\widehat \Lambda}$ that is  an uniformly consistent estimator of $\tilde\ell_{\cdot;p_0,\bm\beta_0,\Lambda_0}$. Next, let us define
$$
\bG_n'= \frac{1}{\sqrt{n}}\sum_{i=1}^n(\xi_i-\bar\xi)\delta_{(X_i,A_i,\bm Z_i)}
$$
where $\xi_1,\dots,\xi_n$ are i.i.d., with zero mean and unit variance random variables, for instance gaussian, independent of the data.

\medskip

\begin{theor}[Asymptotic law approximation] Under the assumptions of Theorem \ref{th_iid}:
$$
\left(\bG_n\tilde\ell_{\cdot;p_0,\bm\beta_0,\Lambda_0},\bG_n'\tilde\ell_{\cdot;\widehat p,\widehat{\bm\beta},\widehat \Lambda}\right)\leadsto ({\cal G},{\cal G'})\mbox{ in }  \left({\bR}^{q+1}\times \ell^\infty([0,\tau])\right)^2
$$
where ${\cal G}$ and ${\cal G}'$ are independent and identically distributed.
\label{var_est}
\end{theor}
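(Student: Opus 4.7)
The plan is to reduce the claim to the multiplier (weighted bootstrap) functional central limit theorem; see van der Vaart and Wellner (1996), Section 2.9. The strategy has three ingredients: (i) identify a $P$-Donsker class $\mathcal F$ containing all the influence functions $\tilde\ell_{t;p,\beta,\Lambda}$ for $t\in[0,\tau]$ and $(p,\beta,\Lambda)$ in a shrinking neighborhood of $(p_0,\bm\beta_0,\Lambda_0)$; (ii) apply the multiplier CLT to obtain joint convergence of $(\bG_n f, \bG_n' f)$ at the true parameters to independent copies of the limit process; (iii) substitute the estimators for the truth in the second coordinate via asymptotic equicontinuity.

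Step (i) is the core technical step. Inspection of the i.i.d. representation in Theorem \ref{th_iid} shows that $\tilde\ell_{t;p,\beta,\Lambda}$ is built from indicators $\mathbf 1(X\leq t, A=k)$, the tilting factor $\exp(\bm\beta^\top\bm Z)$, polynomial functions of $p$, and integrals of uniformly bounded functions against $\Lambda$ and the sub-distributions $H_k$, all restricted to the compact interval $[0,\tau]$. Assumption \textbf{A2} keeps $\bm Z$ and $\bm\beta$ in compact sets, so $\exp(\bm\beta^\top\bm Z)$ is uniformly bounded; Assumption \textbf{A3} ensures that the denominator $E^{(0)}(\cdot;p,\bm\beta)$ is bounded away from zero on $[0,\tau]$ uniformly in a neighborhood of $(p_0,\bm\beta_0)$. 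Each building block lies in a VC-type or Lipschitz-indexed class with polynomial covering numbers, and the nuisance parameter ranges over the uniformly bounded monotone class $\{\Lambda\in\mathrm{BV}[0,\tau]:\Lambda\leq M\}$, which has polynomial bracketing numbers. The preservation theorems for Donsker classes (sums, products, Lipschitz transformations, integrals of bounded functions against uniformly bounded monotone integrators) then imply that $\mathcal F$ is $P$-Donsker with bounded envelope.

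Step (ii) follows at once. Since both $\bG_n$ and $\bG_n'$ annihilate constants, they act effectively on $P$-centered functions; by the unconditional multiplier CLT for Donsker classes with i.i.d. multipliers of mean zero, unit variance and finite $\|\cdot\|_{2,1}$ norm (satisfied by the Gaussian multipliers considered here), the pair $(\bG_n f, \bG_n' f)_{f\in\mathcal F}$ converges jointly in $\ell^\infty(\mathcal F)^2$ to two \emph{independent} copies $(\mathcal G,\mathcal G')$ of the $P$-Brownian bridge indexed by $\mathcal F$. Specialising to $f=\tilde\ell_{t;p_0,\bm\beta_0,\Lambda_0}$ with $t\in[0,\tau]$ gives the claim with true parameters in both coordinates.

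For step (iii), Theorem \ref{th:consistency} provides $(\widehat p,\widehat{\bm\beta})\to(p_0,\bm\beta_0)$ in probability and $\sup_{t\in[0,\tau]}|\widehat\Lambda(t)-\Lambda_0(t)|\to 0$ in probability, so the consistency of $\tilde\ell_{\cdot;\widehat p,\widehat{\bm\beta},\widehat\Lambda}$ in $L^2(P)$ follows from the continuity of the map $(p,\bm\beta,\Lambda)\mapsto\tilde\ell_{\cdot;p,\bm\beta,\Lambda}$ which one reads off its explicit form. Combined with the asymptotic equicontinuity of the multiplier process $\bG_n'$ on the Donsker class $\mathcal F$ established in step (i), this yields $\sup_{t\in[0,\tau]}|\bG_n'\tilde\ell_{t;\widehat p,\widehat{\bm\beta},\widehat\Lambda}-\bG_n'\tilde\ell_{t;p_0,\bm\beta_0,\Lambda_0}|=o_{\mathbb P}(1)$, so one may replace the true parameters by the estimators in the second coordinate without altering the limit. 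The main obstacle is step (i), that is verifying the uniform entropy bound in a neighborhood of the truth in presence of the infinite-dimensional nuisance $\Lambda$; once this is in place, the bootstrap consistency and the independence of $\mathcal G$ and $\mathcal G'$ are consequences of the standard multiplier CLT machinery.
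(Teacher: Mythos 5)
Your proposal is correct and follows exactly the route the paper intends but does not write out (the theorem is stated without proof, and the paper's citations of van der Vaart \& Wellner (1996, Section 2.9) and (2007) point precisely to the multiplier CLT plus the substitution of estimated index functions that you use). Your three steps --- Donsker property of the class of influence functions via preservation theorems, the unconditional multiplier CLT with centered multipliers of finite $\|\cdot\|_{2,1}$ norm giving independent identically distributed limits, and replacement of $(p_0,\bm\beta_0,\Lambda_0)$ by the estimators through $L^2(P)$-consistency and asymptotic equicontinuity --- match the level of rigor of the paper's own sketched arguments for the companion results.
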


\medskip

As a direct consequence of Theorem \ref{var_est} one could obtain the validity of the bootstrap approximation of the asymptotic laws stated in Corollaries \ref{cor_clt} to \ref{cor_clt3}. The details are omitted.

\qquad

\section{Appendix}
\setcounter{equation}{0}

\subsection{Notation}


For any matrix $A$, we denote by $\|A\| = \sqrt{Trace (A^\top A)} .$ Let us recall that vectors are considered as column matrices. The spaces of functions we consider are endowed with the uniform (supremum) norm that is denoted by $\|\cdot\|_\infty.$
Let $\partial_p$ and $\partial_\beta$ denote the partial derivation operators with respect to $p$ and $\beta,$ respectively.

Let
$$
s_{k}^{(l)}(t;\bm{\beta}) = \mathbb E \left\{ S_{n,k}^{(l)}(t;\bm{\beta}) \right\} = \!\mathbb E \left\{ \exp(\bm{\beta}^\top\bm{Z})\bm{Z}^{\otimes l}\mathbf{1}(X\geq t,A=k) \right\},$$
and
$$
e^{(l)}(t;\bm \theta) = e^{(l)}(t;p, \bm \beta) = \mathbb E \left\{ E^{(l)}_n(t;\bm \theta) \right\} = s_{0}^{(l)}(t;\bm{\beta}) + p s_{1}^{(l)}(t;\bm{\beta}), \qquad l=0,1, \; k\in\{0,1,2\}.
$$
 Let 
\begin{multline*}
\ell(p,\bm \beta;\tau) = \mathbb{E} [\bm\beta^\top\bm Z \; \mathbf{1}(X \leq \tau, A=0) ]- \int_{[0,\tau]}  \log\left(e^{(0)}(t;p,\bm \beta)\right)H_0(dt)\\
+ \mathbb E \left[   \log\left(1-\exp\left( -\exp(\bm{\beta}^\top\bm{Z}) \Lambda(X;p,\bm \beta) \right)\right)  \mathbf{1}(X \leq \tau, A=2)\right]\\
-\int_{[0,\tau]} \frac{e^{(0)}(t;1,\bm \beta)}{e^{(0)}(t;p,\bm \beta)}H_{0}(dt).
\end{multline*}
The criterion $\ell(p,\bm \beta;\tau)$ is expected to be the limit of the approximated log-likelihood function $\ell_n(p,\bm \beta;\tau)$. 
Let us recall that  $P$ denotes the probability distribution of $(X,A,\bm Z)$ and for any integrable function $f$ let $Pf=\mathbb{E}[f(X,A,\bm Z)].$ Moreover,
$$
\mathbb{P}_n = \frac{1}{n} \sum_{i=1}^n \delta_{(X_i,A_i,\bm Z _i)}
$$
is the empirical measure, and $
\mathbb{G}_n  = \sqrt{n} \left( \mathbb{P}_n - P\right). 
$ 
Finally, define
$$
\delta_k(a) = \mathbf 1 (a=k),\qquad k\in \{0,1,2\}.
$$

\subsection{Proof of Theorem \ref{consist_lem}}

To prove consistency for $\widehat \beta$, it suffices, for instance, to use the results from section 5.2 of  van der Vaart (1998). This means to check that
\begin{equation}\label{idef}
\ell(p_0,\bm \beta_0 ;\tau)  > \ell(p,\bm \beta;\tau) ,\qquad \forall (p,\bm \beta^\top)^\top \in [\epsilon,1-\epsilon] \times B,\; (p,\bm \beta^\top)^\top \neq (p_0,\bm \beta_0^\top )^\top ,
\end{equation}
\begin{equation}\label{uni1}
\sup_{p\in[\epsilon, 1-\epsilon]} 
\sup_{\beta \in B} \left| \ell_n (p,\bm \beta ;\tau)  - \ell(p,\bm \beta;\tau) \right|  = o_{\mathbb P}(1),
\end{equation}
and the map $ (p,\bm \beta^\top)^\top \mapsto  \ell(p,\bm \beta;\tau)$ is continuous. The continuity condition is a direct consequence of 
the Lebesgue's Dominated Convergence Theorem. Conditions \eqref{idef} and \eqref{uni1} will be consequence of the two following lemmas. 

\begin{lem}\label{lem_idef}
Under the conditions of Theorem \ref{consist_lem}, the true value of the parameter $\theta = (p,\beta^\top)^\top$ is identifiable, that is the condition \eqref{idef} holds. 
\end{lem}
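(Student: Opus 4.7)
Proof plan: My strategy is to prove \eqref{idef} by combining the explicit identification of $p_0$ from \eqref{inv_p} with a Cox-type information inequality for $\bm\beta_0$. I would first rewrite all four terms of $\ell(p,\bm\beta;\tau)$ on a common footing, using the identity $H_0(dt)=\lambda_0(t)\,e^{(0)}(t;p_0,\bm\beta_0)\,dt$ that follows from \eqref{cum_haz} at the true parameter, together with the latent-model decomposition \eqref{model_eq1} to expand the $\{A=2\}$ expectation. This casts $\ell(p_0,\bm\beta_0;\tau)-\ell(p,\bm\beta;\tau)$ as a single integral in which all quantities are indexed by the true law of $(X,A,\bm Z)$, and makes transparent the role of $\Lambda(\cdot;p,\bm\beta)$ as a functional of $(p,\bm\beta)$ via \eqref{cum_haz}.

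Next, I would verify that the score $\partial_{(p,\bm\beta)}\ell$ vanishes at $(p_0,\bm\beta_0)$ by differentiation under the integral, exploiting the chain rule through $\Lambda(\cdot;p,\bm\beta)$ and the equality $\Lambda(\cdot;p_0,\bm\beta_0)=\Lambda_0(\cdot)$. A short Hessian calculation at $(p_0,\bm\beta_0)$, using the positive-definite variance of $\bm Z$ from Assumption \textbf{A2}, should give local strict concavity in a neighborhood of $(p_0,\bm\beta_0)$. For the global strict inequality on the compact set $[\epsilon,1-\epsilon]\times B$, I would separate uniqueness in $p$ from uniqueness in $\bm\beta$: the representation \eqref{inv_p} pins down $p_0$ uniquely from the marginal sub-distribution masses, which would be reflected in a one-dimensional strict concavity argument on $p\mapsto\ell(p,\bm\beta_0;\tau)$; given $p=p_0$, I would express $\ell(p_0,\bm\beta_0;\tau)-\ell(p_0,\bm\beta;\tau)$ as the expectation of a convex function of the scalar $(\bm\beta-\bm\beta_0)^\top\bm Z$ plus a nonnegative $\{A=2\}$ contribution, so that the non-degeneracy of $\bm Z$ forces $\bm\beta=\bm\beta_0$.

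The central difficulty I anticipate is the joint coupling of $p$ and $\bm\beta$ in the fourth term $-\int e^{(0)}(t;1,\bm\beta)/e^{(0)}(t;p,\bm\beta)\,H_0(dt)$ and in the plug-in baseline hazard $\Lambda(\cdot;p,\bm\beta)$ appearing nonlinearly inside the $\{A=2\}$ contribution. In the classical Cox setting the KL inequality is a one-line conditional Jensen bound over risk sets, whereas here these extra terms entangle $p$ and $\bm\beta$ in a way that blocks a clean separation of variables. The cleanest route I foresee is to recognize $\ell(p_0,\bm\beta_0;\tau)-\ell(p,\bm\beta;\tau)$ as the sum of a Kullback-Leibler-type divergence between the conditional sub-distributions induced by $(p_0,\bm\beta_0,\Lambda_0)$ and by $(p,\bm\beta,\Lambda(\cdot;p,\bm\beta))$, plus a strictly positive penalty that arises from the fourth term whenever $p\neq p_0$; compactness of $[\epsilon,1-\epsilon]\times B$ and continuity of $\ell$ would then close the argument.
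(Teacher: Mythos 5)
Your closing sentence --- recognizing $\ell(p_0,\bm\beta_0;\tau)-\ell(p,\bm\beta;\tau)$ as a Kullback--Leibler-type divergence --- is in fact essentially the entirety of the paper's argument, but the plan you actually lay out does not get you there, and the way you frame the divergence is off. The paper writes the conditional (sub-)likelihood of $A$ given $X\in dt$ and $\bm Z=\bm z$ induced by $(p,\bm\beta)$ together with the plug-in baseline $\Lambda(\cdot;p,\bm\beta)$ from \eqref{cum_haz}, observes that at $(p_0,\bm\beta_0)$ it reduces to the true sub-densities $H_k(dt\mid\bm z)$, applies the log-likelihood-ratio inequality conditionally on $(X\in dt,\bm Z=\bm z)$, and integrates over $[0,\tau]$ and $\bm{\mathcal Z}$. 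All four terms of $\ell$, including the fourth, are absorbed into this single divergence; there is no additional ``strictly positive penalty'' sitting outside it. In particular your claim that the fourth term penalizes $p\neq p_0$ is wrong: since $p\mapsto e^{(0)}(t;p,\bm\beta)=s_0^{(0)}(t;\bm\beta)+p\,s_1^{(0)}(t;\bm\beta)$ is increasing, the term $-\int_{[0,\tau]} e^{(0)}(t;1,\bm\beta)/e^{(0)}(t;p,\bm\beta)\,H_0(dt)$ is increasing in $p$ and maximized at $p=1$, not at $p_0$; the identification of $p$ comes from the trade-off between this term and the $\{A=0\}$ and $\{A=2\}$ contributions, which only the joint KL structure captures.

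The other steps of your plan have concrete gaps. Local strict concavity at $(p_0,\bm\beta_0)$ (vanishing score plus a negative definite Hessian) combined with compactness and continuity does not yield the global strict inequality \eqref{idef}: it only excludes competing maximizers in a neighborhood of the truth. Axis-wise arguments --- strict concavity of $p\mapsto\ell(p,\bm\beta_0;\tau)$ and an inequality for $\bm\beta\mapsto\ell(p_0,\bm\beta;\tau)$ --- say nothing about parameter points at which both coordinates differ from the truth. And the assertion that $\ell(p_0,\bm\beta_0;\tau)-\ell(p_0,\bm\beta;\tau)$ is the expectation of a convex function of the scalar $(\bm\beta-\bm\beta_0)^\top\bm Z$ is not justified: $\bm\beta$ also enters through $\log e^{(0)}(X;p_0,\bm\beta)$ and through the plug-in $\Lambda(\cdot;p_0,\bm\beta)$ inside the $\{A=2\}$ term, neither of which depends on $\bm\beta$ only via $(\bm\beta-\bm\beta_0)^\top\bm Z$; even in the classical Cox case the information inequality is obtained by Jensen's inequality over risk sets, not by pointwise convexity in that scalar. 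Your instinct to invoke \eqref{inv_p} and the positive definite variance of $\bm Z$ is sound, but these belong to the very last step of the paper's proof --- showing that equality of the $\{A=0\}$ conditional sub-densities for almost all $(t,\bm z)$ forces $(p,\bm\beta)=(p_0,\bm\beta_0)$ --- a step the paper itself only asserts and which would deserve the explicit argument you are gesturing at.
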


\begin{proof}[Proof of Lemma \ref{lem_idef}]
Consider the conditional log-likelihood of the multinomial variable $A\in\{0,1,2\}$ given $\bm Z = \bm z$
\begin{multline*}
\log p (t,A,\bm z; p,\beta) = \delta_0(A)    
\exp(\bm{\beta}^\top\bm{z}) \left\{H_0([t,\infty)|\bm{z})+pH_1([t,\infty)|\bm{z}) \right\}\Lambda(dt)\\
+ \delta_1(A)  \exp\left(-\exp(\bm\beta^\top\bm z) \Lambda([0,t])\right)
F_C(dt|\bm z)\\
+\delta_2(A)    \left\{1-  \exp\left(-\exp(\bm\beta^\top\bm z) \Lambda([0,t])\right)  \right\}F_C(dt|\bm z).
\end{multline*}
Following the notation of Gill (1994), here we treat dt not just as the length
of a small interval $[t, t + dt)$ but also as the name of the interval itself.
Note that 
$$
\log p (t,A,\bm z; p_0,\beta_0) =  \delta_0(A)  H_0(dt|\bm{z}) + \delta_1(A)   H_1(dt|\bm{z}) + \delta_2(A)   H_2(dt|\bm{z}).
$$
By the standard log-likelihood ratio inequality, for any $t$ and $\bm z$,  we have
$$
\mathbb E \left[ \log \frac{p (t,A,\bm z; p,\beta)}{p (t,A,\bm z; p_0,\beta_0) } \bigg| X \in dt, \bm Z = \bm z \right] \leq 0.
$$
Integrating with respect to $X\in [0,\tau]$ and $\bm Z$, we obtain 
$$
\mathbb E \left[ \log \frac{p (X,A,\bm Z; p,\beta)}{p (X,A,\bm Z; p_0,\beta_0) }   \mathbf 1 (X\in [0,\tau]) \right] \leq 0.
$$
If the last inequality becomes equality, then necessarily 
$p (t,0,\bm z; p,\beta)=p (t,0,\bm z; p_0,\beta_0) $ for almost all $t\in[0,\tau]$ in the support of $X$ and $\bm z$ in $\mathcal Z$. With our assumptions, this cannot happen when  $ (p,\bm \beta^\top)^\top \neq (p_0,\bm \beta_0^\top )^\top $.  
\end{proof}

\quad

\begin{lem}\label{lem_uni1}
Under the conditions of Theorem \ref{consist_lem}, the  condition \eqref{uni1} holds. Moreover, 
 $$
\sup_{p\in[\epsilon, 1-\epsilon]} 
\sup_{\beta \in B}\sup_{t\in[0, \tau]}  \left| \Lambda_n (t;p,\bm \beta )- \Lambda (t;p,\bm \beta ) \right|  = o_{\mathbb P}(1).
 $$
\end{lem}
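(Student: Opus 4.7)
The plan is to first establish the ``Moreover'' clause on $\Lambda_n$, and then use it to prove condition \eqref{uni1}. The backbone is a chain of uniform Glivenko--Cantelli results combined with continuous-mapping arguments. I would begin by showing that
$$
\sup_{(t,\bm\beta) \in [0,\tau] \times B} \left| S_{n,k}^{(l)}(t;\bm\beta) - s_k^{(l)}(t;\bm\beta) \right| = o_{\mathbb P}(1), \qquad l \in \{0,1\},\; k \in \{0,1,2\}.
$$
The indexing classes $\{(x,a,\bm z) \mapsto \exp(\bm\beta^\top \bm z)\, \bm z^{\otimes l}\, \mathbf{1}(x \geq t, a=k)\}$ are products of monotone indicators in $t$ (VC-subgraph) with functions smooth in $\bm\beta$ on the compact set $B$; under \textbf{A2} the envelope is bounded, so each class is $P$-Glivenko--Cantelli, and the uniform convergence of $E_n^{(l)}(t;p,\bm\beta)$ to $e^{(l)}(t;p,\bm\beta)$ on $[0,\tau]\times[\epsilon,1-\epsilon]\times B$ follows by linearity in $p$. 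Next, by \textbf{A3} together with $\|\bm Z\| \leq c$,
$$
\inf_{[0,\tau] \times [\epsilon,1-\epsilon] \times B} e^{(0)}(t;p,\bm\beta) \geq e^{-c \sup_B \|\bm\beta\|}\left[ H_0([\tau,\infty)) + \epsilon H_1([\tau,\infty)) \right] =: \kappa > 0,
$$
so, with probability tending to one, $E_n^{(0)} \geq \kappa/2$ uniformly on the same set.

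With these preliminaries, the uniform convergence of $\Lambda_n$ follows from the decomposition
$$
\Lambda_n(t;p,\bm\beta) - \Lambda(t;p,\bm\beta) = \int_{[0,t]} \left[ \frac{1}{E_n^{(0)}(s;p,\bm\beta)} - \frac{1}{e^{(0)}(s;p,\bm\beta)} \right] N_{n,0}(ds) + \int_{[0,t]} \frac{(N_{n,0} - H_0)(ds)}{e^{(0)}(s;p,\bm\beta)}.
$$
The first term is bounded by $4\|E_n^{(0)} - e^{(0)}\|_\infty/\kappa^2$ times $N_{n,0}([0,\tau]) \leq 1$, hence is $o_{\mathbb P}(1)$ uniformly. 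The second, viewed as $(\mathbb P_n - P)\phi_{t,p,\bm\beta}$ with $\phi_{t,p,\bm\beta}(x,a,\bm z) = \mathbf{1}(x \leq t, a=0)/e^{(0)}(x;p,\bm\beta)$, is $o_{\mathbb P}(1)$ uniformly via a bracketing Glivenko--Cantelli argument, since $\{\phi_{t,p,\bm\beta}\}$ is Lipschitz in $(t,p,\bm\beta)$ over the compact parameter set with envelope bounded by $1/\kappa$.

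Condition \eqref{uni1} is then established by splitting $\ell_n - \ell$ into four pieces. The linear term $n^{-1}\sum_i D_{0i}^\tau \bm\beta^\top \bm Z_i - \mathbb{E}[\bm\beta^\top \bm Z \,\mathbf{1}(X\leq\tau,A=0)]$ vanishes uniformly in $\bm\beta$ by standard Glivenko--Cantelli. For the logarithmic term $\int \log E_n^{(0)}\, dN_{n,0}$, I would decompose as $\int (\log E_n^{(0)} - \log e^{(0)})\, dN_{n,0}$ (controlled by the mean-value bound $|\log u - \log v| \leq |u-v|/\min(u,v)$ using the lower bounds $\kappa/2$ and $\kappa$) plus $\int \log e^{(0)}\, d(N_{n,0} - H_0)$ (another Glivenko--Cantelli for a Lipschitz class with bounded envelope). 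The ratio term $\int [E_n^{(0)}(t;1,\bm\beta)/E_n^{(0)}(t;p,\bm\beta)]\, dN_{n,0}(t)$ is analogous, both numerator and denominator being uniformly bounded above and below.

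The main obstacle is the $A=2$ term
$$
\frac{1}{n} \sum_i D_{2i}^\tau \log\!\left(1 - \exp\!\left(-\exp(\bm\beta^\top \bm Z_i)\, \Lambda_n(X_i;p,\bm\beta)\right)\right),
$$
in which the nonlinear transformation $u \mapsto \log(1-e^{-u})$ diverges as $u \downarrow 0$ and is composed with $\Lambda_n$ evaluated at the random $X_i$. To handle this I would truncate the sum to $\{X_i \geq \delta\}$, on which $\Lambda_n(X_i;\cdot) \geq \Lambda(\delta;\cdot)/2 > 0$ uniformly for $n$ large (by the uniform convergence of $\Lambda_n$ established above and the positivity of $\Lambda(\delta;p,\bm\beta)$); on this event a plug-in empirical-process argument in the spirit of van der Vaart \& Wellner (2007) applied to the uniformly continuous composition yields the required convergence. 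The residual piece on $\{X_i < \delta\}$ is made negligible by choosing $\delta$ small, using $\mathbb P(X \leq \delta, A=2) \to 0$ as $\delta \downarrow 0$ together with an integrability argument for $|\log \Lambda(X;p,\bm\beta)|$ on $\{A=2\}$ that is uniform over the compact parameter set.
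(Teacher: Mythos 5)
Your proof of the ``Moreover'' clause is essentially the paper's own argument: the same Glivenko--Cantelli step for the classes indexing $S_{n,k}^{(l)}$, the same uniform lower bound on $e^{(0)}$ deduced from \textbf{A2}--\textbf{A3}, and the identical two-term decomposition of $\Lambda_n-\Lambda$ into a ``replace $E_n^{(0)}$ by $e^{(0)}$'' term and a centered empirical-process term. Where you genuinely diverge is on condition \eqref{uni1}: the paper's proof of the lemma stops after the $\Lambda_n$ part and merely asserts afterwards that \eqref{uni1} follows from the lemma ``and the uniform law of large numbers,'' whereas you carry out the four-term splitting of $\ell_n-\ell$ and, in particular, confront the only delicate piece, the $A=2$ term where $u\mapsto\log(1-e^{-u})$ blows up as $u\downarrow 0$. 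Your truncation at $\{X_i\ge\delta\}$ plus the smallness of $\mathbb P(X\le\delta,A=2)$ is a sensible way to supply what the paper omits; this is added value rather than a different method. Two small imprecisions to tighten: the class $\{\phi_{t,p,\bm\beta}\}$ is not Lipschitz in $t$ (only in $(p,\bm\beta)$); in $t$ it is monotone, which is what actually delivers the bracketing bound, so say that instead. And the truncation level $\delta$ must be taken with $H_0([0,\delta])>0$ (equivalently $\delta$ past the left endpoint of the support of $H_0$), since otherwise $\Lambda(\delta;p,\bm\beta)=0$ and the uniform lower bound $\Lambda_n(X_i;\cdot)\ge\Lambda(\delta;\cdot)/2$ is vacuous; one should also note that under the latent model $\Lambda_0(X)>0$ almost surely on $\{A=2\}$, which is what makes both the truncated remainder and the integrability of $|\log\Lambda(X;p,\bm\beta)|$ on $\{A=2\}$ work uniformly over the compact parameter set. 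Neither point invalidates the argument.
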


\begin{proof}[Proof of Lemma \ref{lem_uni1}]
First, note that 
\begin{equation}\label{conv_E}
\sup_{p\in[\epsilon, 1-\epsilon]} 
\sup_{\beta \in B}\sup_{t\in[0, \tau]}  \left| E^{(l)}(s;p,\bm \beta) - e^{(l)}(s;p,\bm \beta) \right|  = o_{\mathbb P}(1), \qquad l=0,1. 
\end{equation}
This is a consequence of the uniform law of large numbers for the classes of functions
$$
\{ (x,a,\bm z) \mapsto  \exp(\beta^\top \bm z) z^{\otimes l} \mathbf 1 (x\geq t) \delta_k(a): \beta\in B, t\in[0,\tau],k\in\{0,1,2\} \},\quad l=0,1.
$$
These two classes of functions are bounded and have polynomial complexity, that is they are VC classes. In particular, they are Glivenlo-Cantelli classes. Next, by our assumptions, for any, $p$, $\bm z$ and $l$, we have $t\mapsto  e^{(l)}(t;p,\bm \beta)$ is decreasing. Moreover, 
\begin{equation}\label{low_b}
\inf_{l\in\{0,1\}}\inf_{p\in[\epsilon, 1-\epsilon]} 
\inf_{\beta \in B}e^{(l)}(\tau;p,\bm \beta)> 0
\end{equation}
Recall,
$$
\Lambda (t;p,\bm \beta ) = \int_{[0,t]}\frac{H_{0}(ds)}{e^{(0)}(s;p,\bm \beta)}.
$$
Finally, we can write 
\begin{multline*}
\Lambda_n (t;p,\bm \beta )- \Lambda (t;p,\bm \beta )  = \int_{[0,t]} \frac{\delta_0(a)}{E^{(0)}(s;p,\bm \beta)} d \mathbb P_n(s,a,\bm z) - \int_{[0,t]}  \frac{\delta_0(a)}{e^{(0)}(s;p,\bm \beta)}
dP(s,a,\bm z) \\
= \int_{[0,t]} \left[ \frac{\delta_0(a)}{E^{(0)}(s;p,\bm \beta)}- \frac{\delta_0(a)}{e^{(0)}(s;p,\bm \beta)}\right] d \mathbb P_n(s,a,\bm z)  
+ \int_{[0,t]}  \frac{\delta_0(a)}{e^{(0)}(s;p,\bm \beta)}d(\mathbb P_n - P)(s,a,\bm z) 
\end{multline*}
and the result follows from \eqref{conv_E}, \eqref{low_b} and again the uniform law of large numbers. \end{proof}

\quad

To justify Theorem \ref{consist_lem}, it suffices to notice that Lemma \ref{lem_uni1} and the uniform law of large numbers guarantee condition \eqref{uni1} and to use Theorem 5.7 from  van der Vaart (1998). 

\subsection{Asymptotic normality}

In this section we sketch the arguments allowing to prove Theorem \ref{th_iid} and  Corollaries \ref{cor_clt} to \ref{cor_clt3}.

Note that 
$$
 \partial_\beta \Lambda(t;\theta_0) =  - \int_{[0,t]} \frac{e^{(1)}(s;\theta_0)}{[e^{(0)}(s;\theta_0)]^2} H_0(ds)  =- \int_{[0,t]} \frac{e^{(1)}(s;\theta_0)}{e^{(0)}(s;\theta_0)}\Lambda(ds;\theta_0) , \quad t\in[0,\tau]
,$$
with $\Lambda(\cdot;\theta_0)  = \Lambda(\cdot; p_0,\bm{\beta}_0) $ defined in \eqref{cum_haz}.
Next, define
$$
\wp_n (t;\theta) = \frac{\partial \Lambda_n (t;\theta)}{\partial p}= \frac{\partial \Lambda_n (t;p,\beta)}{\partial p} \qquad \text{ and } \qquad \wp_0 (t) = \frac{\partial \Lambda (t,\theta_0)}{\partial p} = \frac{\partial \Lambda (t;p_0,\beta_0)}{\partial p} .
$$

 Consider the score function
\begin{eqnarray*}
U_n(\theta;\tau)&=&U_n(p,\beta;\tau) \\
&=&\partial_\beta \ell_n(p,\beta;\tau)\\
&=&
\frac{1}{n}\sum_{i=1}^n \mathbf 1 (X_i\in [0,\tau]) \delta_0(A_i)\left(\bm Z_i- \frac{E^{(1)}(X_i;p,\beta)}{E^{(0)}(X_i;p,\beta)}\right)\\
&&-\frac{1}{n}\sum_{i=1}^n \mathbf 1 (X_i\in [0,\tau]) \delta_0(A_i) \\ &&\qquad \qquad \times \frac{E^{(1)}(X_i;1,\beta)E^{(0)}(X_i;p,\beta)-
E^{(1)}(X_i;p,\beta)E^{(0)}(X_i;1,\beta)}{\left[E^{(0)}(X_i;p,\beta)\right]^2}\\
&&+ \frac{1}{n}\sum_{i=1}^n \mathbf 1 (X_i\in [0,\tau]) \delta_2(A_i)
\frac{\exp\left(-V_i(\theta)\right)}{1-\exp\left(-V_i(\theta)\right)} W_i(\theta),
\end{eqnarray*}
where
$$
V_i(\theta)=\exp(\beta^\top \bm Z_i)\sum_{j=1}^n\frac{ \mathbf{1}(X_j\leq X_i ) }{E^{(0)}(X_j;\beta,p)}\delta_0(A_j)
$$
and
$$
W_i(\theta) =\exp(\beta^\top  \bm Z_i)\sum_{j=1}^n \mathbf{1}(X_j\leq X_i ) \delta_0(A_j)
\frac{\bm Z_i E^{(0)}(X_j;\beta, p)-E^{(1)}(X_j;\beta,
p)}{[E^{(0)}(X_j;\beta, p)]^2}.
$$
Since we imposed $\mathbb P (T=C)=0$, we could equivalently define $V_i(\theta)$ with $ \mathbf{1}(X_j\leq X_i )$ instead of $ \mathbf{1}(X_j < X_i )$, as it would require the definition of the approximate  log-likelihood $\ell_n$. Let us also consider $$U_0(\theta;\tau) = \partial_\beta \ell(p,\beta;\tau),$$ the limit of this score function. The following lemma is a simple consequence of the uniform law of large numbers and the convergence in probability of $U-$statistics, and hence the proof is omitted. 

\begin{lem}\label{prop1}
Under the Assumptions \textbf{A1}--\textbf{A3} and if 
 $\theta_n=(p_n,\bm \beta_n^\top)^\top \rightarrow \theta_0=(p_0,\bm \beta_0^\top )^\top$
in probability, then
\begin{enumerate}
\item $\left\|  \partial_\beta U_n(\theta_n ,\tau) - \partial_\beta U_0 (\theta_0 ,\tau)  \right\| = o_{\mathbb P}(1); $
\item $\left\| \partial_p U_n(\theta_n ,\tau) - \partial_p U_0 (\theta_0 ,\tau)  \right\| = o_{\mathbb P}(1). $
\item $\sup_{t\in [0,\tau]}\| \partial_\beta \Lambda_n(t;\theta_n) - \partial_\beta \Lambda(t;\theta_0) \| = o_{\mathbb P}(1); $
\item $\sup_{t\in [0,\tau]}|\wp_n (t;\theta_n) - \wp_0 (t)  | = o_{\mathbb P}(1). $
\end{enumerate}
\end{lem}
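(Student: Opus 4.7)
The plan is to reduce each of the four claims to two ingredients: (a) uniform-in-$(\theta,t)$ convergence of the building blocks $E_n^{(l)}(t;\theta)$ and their first partial derivatives in $\theta$ on the compact set $[\epsilon,1-\epsilon]\times B\times[0,\tau]$, and (b) continuity of the corresponding population limits in $\theta$, which lets one replace a deterministic $\theta$ by $\theta_n$ once the uniform convergence is in hand. For (a), the argument from the proof of Lemma \ref{lem_uni1} carries over verbatim: the classes
$$
\bigl\{(x,a,\bm{z})\mapsto \exp(\bm{\beta}^\top \bm{z})\,\bm{z}^{\otimes l}\mathbf{1}(x\geq t)\delta_k(a):\beta\in B,\ t\in[0,\tau]\bigr\},\quad l=0,1,2,\ k=0,1,2,
$$
are uniformly bounded by Assumption \textbf{A2} and of polynomial complexity, hence Glivenko--Cantelli, so that $E_n^{(l)}$, $\partial_\beta E_n^{(l)}$ and $\partial_p E_n^{(l)}$ all converge uniformly to their population counterparts. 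The lower bound \eqref{low_b}, extended to $e^{(0)}(\cdot;\theta)$ on a neighbourhood of $\theta_0$ by continuity, then guarantees that any $C^1$ function of these ingredients inherits uniform convergence on the same compact.

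Parts 3 and 4 follow directly. Writing
$$
\partial_\beta \Lambda_n(t;\theta) = -\int_{[0,t]} \frac{E_n^{(1)}(s;\theta)}{[E_n^{(0)}(s;\theta)]^2}\, N_{n,0}(ds),\qquad \wp_n(t;\theta) = -\int_{[0,t]} \frac{S_{n,1}^{(0)}(s;\bm{\beta})}{[E_n^{(0)}(s;\theta)]^2}\, N_{n,0}(ds),
$$
I would split each difference from the corresponding limit into (i) the integral of the limiting integrand against $N_{n,0}-H_0$, which is $o_{\mathbb P}(1)$ by Glivenko--Cantelli applied to the VC class $\{(x,a,\bm{z})\mapsto g(x;\theta)\mathbf{1}(x\leq t,a=0)\}$ with $g$ bounded uniformly in $(s,\theta)$, plus (ii) the integral of the difference of the two integrands against $N_{n,0}$, which is $o_{\mathbb P}(1)$ by (a) combined with $N_{n,0}([0,\tau])=O_{\mathbb P}(1)$ and \eqref{low_b}. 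Continuity in $\theta$ of the population limit then handles the swap from $\theta$ to $\theta_n\to\theta_0$.

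For parts 1 and 2, the first two summands of $\partial_\beta U_n$ and $\partial_p U_n$ are themselves sample averages of $C^1$ functions of the building blocks above, handled as in the previous paragraph. The third summand is a sample average involving $\Phi(v)=e^{-v}/(1-e^{-v})$ evaluated at $V_i(\theta)$, which is a bounded smooth functional of $\Lambda_n(X_i;\theta)$ and $\partial_\beta\Lambda_n(X_i;\theta)$. Once $V_i(\theta)$ is bounded uniformly away from the singularity $v=0$ of $\Phi$ on $\{A_i=2,\, X_i\leq\tau\}$ and a neighbourhood of $\theta_0$, the function $\Phi$ and its derivatives become Lipschitz on the relevant range, so the uniform consistency of $\Lambda_n$, $\partial_\beta\Lambda_n$ and $\wp_n$ (Lemma \ref{lem_uni1} and parts 3--4 just established) together with the uniform LLN over a bounded VC class of functions of $(X_i,A_i,\bm{Z}_i)$ yield the conclusion via the standard perturbation $U_n(\theta_n)-U_0(\theta_0)=[U_n-U_0](\theta_n)+[U_0(\theta_n)-U_0(\theta_0)]$.

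The main obstacle is precisely the uniform control of $V_i(\theta)$ near $v=0$: the plan is to use Assumption \textbf{A3} together with the fact that, with probability tending to one, the smallest uncensored observation lies strictly below $\min\{X_i:A_i=2,\, X_i\leq\tau\}$, which forces $\Lambda_n(X_i;\theta)$ to be bounded below by a positive constant uniformly over the relevant event and parameter neighbourhood. With this quantitative non-degeneracy in hand, the reduction to step (a) is mechanical.
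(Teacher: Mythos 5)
Your overall architecture is the one the paper intends (the paper in fact omits the proof, asserting it is ``a simple consequence of the uniform law of large numbers and the convergence in probability of $U$-statistics''): uniform convergence of $E^{(l)}_n$ and its $\theta$-derivatives over the compact $[\epsilon,1-\epsilon]\times B\times[0,\tau]$ via bounded VC / Glivenko--Cantelli classes, the lower bound \eqref{low_b}, and continuity of the population limits in $\theta$ to pass from $\theta_n$ to $\theta_0$. Parts 3 and 4, and the first two summands of $\partial_\beta U_n$ and $\partial_p U_n$, are correctly disposed of this way.

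The gap is in your treatment of the $\delta_2$-term. You propose to bound $V_i(\theta)$ uniformly away from the singularity of $\Phi(v)=e^{-v}/(1-e^{-v})$ at $v=0$ by arguing that, with probability tending to one, the smallest uncensored observation precedes every observation with $A_i=2$, $X_i\le\tau$. First, that ordering need not hold with probability tending to one: under \textbf{A1}--\textbf{A3} the supports of $H_0$ and $H_2$ may share the same left endpoint (e.g.\ $T$ and $C$ both continuous with support starting at $0$), and then the overall minimum carries the label $A=2$ with asymptotically non-vanishing probability. Second, and more importantly, even when the ordering holds it only yields $V_i(\theta)>0$, not $V_i(\theta)\ge c>0$: for the smallest $A_i=2$ observation, $\Lambda_n(X_i;\theta)$ can be of order $n^{-1}$, so $\Phi(V_i)$ and $\Phi'(V_i)$ blow up and your Lipschitz argument collapses. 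The correct device is algebraic cancellation rather than a lower bound: $W_i(\theta)$, $\partial_\beta V_i(\theta)$, $\partial_p V_i(\theta)$ and the $\theta$-derivatives of $W_i(\theta)$ are all $O(V_i(\theta))$ uniformly, since by \textbf{A2} and \eqref{low_b} each is dominated by a constant times $\sum_j \mathbf{1}(X_j\le X_i)\delta_0(A_j)/E^{(0)}(X_j;\theta)$. Consequently the integrands appearing in $\partial_\beta U_n$ and $\partial_p U_n$ involve only $v\Phi(v)$ and $v^2\Phi'(v)$, which extend to bounded continuous functions on $[0,\infty)$ (with values $1$ and $-1$ at $v=0$). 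With this substitution your reduction to the uniform consistency of $\Lambda_n$, $\partial_\beta\Lambda_n$, $\wp_n$ plus a ULLN over a bounded class does go through; the same observation is needed even to see that the population quantities $\partial_\beta U_0(\theta_0,\tau)$ and $\partial_p U_0(\theta_0,\tau)$ are finite.
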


Let us sketch the arguments of the proof of Theorem \ref{iid_rep}. By the definition of $\widehat \beta$ and the first-order Taylor expansion of $U_n(\theta;\tau)$ in a neighborhood of  $\theta_0$,
$$
\sqrt{n} U_n(\widehat \theta ,\tau) = 0 = \sqrt{n} U_n(\theta_0 ,\tau) + \partial_\beta U_n(\theta^*_n ,\tau) \sqrt{n}(\widehat \beta - \beta_0) + \partial_p U_n(\theta^*_n ,\tau) \sqrt{n}(\widehat p - p_0),
$$
where $\theta_n^*$ is a point between $\widehat \theta$ and $\theta_0$. By Lemma \ref{prop1}, if the $q\times q-$matrix $\partial_\beta U_0 (\theta_0 ,\tau)$ is invertible,
$$
\sqrt{n}(\widehat \beta - \beta_0) = - \partial_\beta U_0 (\theta_0 ,\tau)^{-1}  \sqrt{n} U_n(\theta_0 ,\tau) - 
\partial_\beta U_0 (\theta_0 ,\tau)^{-1} \partial_p U_0(\theta_0 ,\tau)  \sqrt{n}(\widehat p - p_0) +o_{\mathbb{P}}(1).
$$ 
Hence, the asymptotic normality of $\sqrt{n}(\widehat \beta - \beta_0)$ will follow from the joint asymptotic normality of $ \sqrt{n} U_n(\theta_0 ,\tau) $ and $\sqrt{n}(\widehat p - p_0) $. On the other hand, by a Taylor expansion and Proposition \ref{prop1}, for some $\theta_n^\dag$  between $\widehat \theta$ and $\theta_0$, we can write
\begin{eqnarray*}
 \sqrt{n} \left(\Lambda_n(t;\widehat \theta) -  \Lambda(t;\theta_0)\right) &=&  \sqrt{n} \left(\Lambda_n(t;\theta_0) -  \Lambda(t;\theta_0)\right)\\
 &&+ \partial_\beta \Lambda_n(t;\theta_n^\dag)^\top \sqrt{n}(\widehat\beta - \beta_0) + \wp_n(t,\theta_n^\dag) \sqrt{n}(\widehat p - p_0)\\
 &=& \sqrt{n} \left(\Lambda_n(t;\theta_0) -  \Lambda(t;\theta_0)\right)\\
 &&+ \partial_\beta \Lambda(t;\theta_0)^\top \sqrt{n}(\widehat \beta - \beta_0) + \wp(t,\theta_0) \sqrt{n}(\widehat p - p_0)+  o_{\mathbb{P}}(1).
\end{eqnarray*}
Hence, the asymptotic normality of $\sqrt{n} \left(\widehat \Lambda(t) -  \Lambda_0(t)\right) $ will  follow from the joint asymptotic normality of $ \sqrt{n} \left(\Lambda_n(t;\theta_0) -  \Lambda(t;\theta_0)\right),$ $ \sqrt{n} U_n(\theta_0 ,\tau) $ and $\sqrt{n}(\widehat p - p_0) $. Gathering facts, we have
$$
\sqrt{n} 
\begin{pmatrix}
\widehat p - p_0\\
\widehat \beta - \beta_0 \\
\widehat \Lambda(t) -  \Lambda_0(t)
\end{pmatrix}
= \Sigma_1 (t) \sqrt{n}
\begin{pmatrix}
U_n(\theta_0 ,\tau) \\
\Lambda_n(t;\theta_0) -  \Lambda(t;\theta_0) \\
\widehat p - p_0
\end{pmatrix}
+ o_{\mathbb{P}}(1),
$$
where
$$
\Sigma_1 (t) \! = \! 
  \begin{pmatrix}
  0 & 0 & 1\\
  - \partial_\beta U_0 (\theta_0 ,\tau)^{-1}  &  0  &  -  \partial_\beta U_0 (\theta_0 ,\tau)^{-1} \partial_p U_0(\theta_0 ,\tau)\\
  \!\! - \partial_\beta \Lambda(t;\theta_0)^\top \partial_\beta U_0 (\theta_0 ,\tau)^{-1}   &  1  &   \wp(t,\theta_0)  -   \partial_\beta \Lambda(t;\theta_0)^\top \partial_\beta U_0 (\theta_0 ,\tau)^{-1} \partial_p U_0(\theta_0 ,\tau) 
 \end{pmatrix} .
$$
Hence, it suffices to study the asymptotic behavior of the $(q+2)-$dimension vector  
$$
\sqrt{n}\left(  
U_n(\theta_0 ,\tau) ^\top,
\Lambda_n(t;\theta_0) -  \Lambda(t;\theta_0) ,
\widehat p - p_0
\right)^\top.
$$

\subsubsection{I.i.d. representation of $ \widehat p $}

It is clear that the class of 0/1-valued functions $\delta_k(\cdot)$ defined on $\{0,1,2\}$ and indexed by $k\in\{0,1,2\}$ is $P-$Donsker. 
We have
$$
\widehat p = \frac{\mathbb{P}_n  \delta_0}{\mathbb{P}_n ( \delta_0 + \delta_2)}.
$$
Using the first-order Taylor expansion for  $f(x_1,x_2) = x_1/(x_1+x_2)$ with $x_1,x_2\geq c$ for $c$ some small positive constant, we easily derive the  representation
\begin{eqnarray*}
\sqrt{n} (\widehat p - p) &=& \sqrt{n}\left[ f( \mathbb{P}_n  \delta_0, \mathbb{P}_n  \delta_2) - f(P  \delta_0, P  \delta_2)   \right]\\
&=& \frac{\partial f}{\partial x_1}(P \delta_0, P  \delta_2) \mathbb{G}_n \delta_0 +  \frac{\partial f}{\partial x_2}(P \delta_0, P  \delta_2) \mathbb{G}_n \delta_2 + o_{\mathbb{P}}(1)\\
&=& \frac{P\delta_2}{(P \delta_0+ P  \delta_2)^2}\mathbb{G}_n \delta_0 - \frac{P\delta_0}{(P \delta_0+ P  \delta_2)^2}\mathbb{G}_n \delta_2 + o_{\mathbb{P}}(1).
\end{eqnarray*}

\subsubsection{I.i.d. representation of $\Lambda_n(t;\theta_0) $}

For any $t\geq 0,$ 
$k,l\in\{0,1,2\}$, let us define 
$$
(x,a,z) \!\mapsto f_{t}^{(k,l)} (x,a,{\bm z}) =  \exp(\beta_0^\top \!{\bm z}) {\bm z}^{\otimes l}\mathbf{1}(x\geq t)\delta_k(a) ,\quad x\geq 0, a\in\{0,1,2\},{\bm z} \in\mathcal{Z}\!\subset\! \mathbb{R}^q.
$$
Thus $Pf_{t}^{(k,l)}= s_{k}^{(l)}(t;\beta_0)$. 
For any $k,l\in\{0,1,2\}$  consider the family of such functions
$$
\mathcal{F}^{(k,l)} = \left\{ f_{t}^{(k,l)} (\cdot,\cdot,\cdot) : t\in[0,\tau]\right\}.
$$
Each of such families are clearly uniformly bounded and $P-$Donsker. Next, let
$$
e_t^{(0)}(x,a,{\bm z}) = f_{t}^{(0,0)} (x,a,{\bm z}) + p_0 f_{t}^{(1,0)} (x,a,{\bm z}).
$$
Thus $Pe_t^{(0)} = e^{(0)} (t;\theta_0)$ and we can rewrite 
$$
\Lambda_n(t;\theta_0) = \mathbb{P}_n\left[ \frac{\delta_0(a)\mathbf{1}(x\leq t)}{\mathbb{P}_n e_x^{(0)} } \right]
$$
and
$$
\Lambda(t;\theta_0) = P\left[ \frac{\delta_0(a)\mathbf{1}(x\leq t)}{P e_x^{(0)} } \right].
$$
Hence, we can write
$$
\sqrt{n} \left( \Lambda_n(t;\theta_0)-\Lambda(t;\theta_0) \right) =\sqrt{n}\left[ \int_{[0,t]} \frac{1}{A_n} dB_n - \int_{[0,t]} \frac{1}{A} dB \right]
$$
for the c\`adl\`ag functions
$$
A (t)= P e_t^{(0)}, \qquad B(t) = P\left[ \mathbf{1}(x\leq t) \delta_0(a)\right] = p_0 \int_{[0,t]} A(s) \Lambda(ds;\theta_0),\qquad t\in[0,\tau],
$$
and 
$
A_n, B_n
$
their empirical version obtained by replacing $P$ by $\mathbb{P}_n.$

Let $D[a,b]$ be the space of c\`adl\`ag functions on $[a, b]$ and let 
$BV_M[a, b]$ be the set of all
functions $B \in D[a, b]$ with total variation $|B(0)|+\int_{(a,b]}
|B(ds)| \leq M.$ Let $$\mathbb{D}_M= \{A \in D[0,\tau]: A\geq \epsilon\} \times BV_M[0, \tau] $$  for some  positive constants $\epsilon, M$. For sufficiently small $\epsilon$ and sufficiently large $M$ (depending on $c$ from assumption \textbf{A2} and $ H_0([\tau,\infty)) + H_1([\tau,\infty)) >0$ from assumption \textbf{A3}, $A,$  $B$  and, with probability tending to 1, $A_n$, $B_n$ defined above belong to $\mathbb{D}_M$. The $ D[0, \tau]-$valued map $(A,B)\mapsto \int_{[0,\cdot]}(1/A)dB$ is Hadamard differentiable on the set $\mathbb{D}_M$ and the derivative map is  given by 
$$
(\alpha,\beta) \mapsto \int_{[0,\cdot]} (1/A)d\beta - \int_{[0,\cdot]}(\alpha/A^2) dB;
$$
see, for instance, Kosorok (2008)  section 12.2.  The integral $ \int_{[0,\cdot]} (1/A)d\beta$ is defined via integration by parts if $\beta$ is not of bounded variation. To derive the i.i.d. representation, let us use  the Hadamard derivative with 
$$
\alpha  = \mathbb{G}_n e_\cdot^{(0)},\quad \beta =  \mathbb{G}_n \left[ \delta_0(a)\mathbf{1}(x\leq \cdot) \right].
$$
Since $e^{(0)}(t;\theta_0) = Pe_s^{(0)}$.  Deduce that for any $t\in[0,\tau]$,
\begin{eqnarray*}
\sqrt{n}\left( \Lambda_n(t;\theta_0)-\Lambda(t;\theta_0) \right) &=&  \mathbb{G}_n f_t^{(2)}\\
&&- p_0 \int_{[0,t]} \left\{ \mathbb{G}_n f^{(0,0)}_s    \right\} \frac{\Lambda (ds;\theta_0)}{ e^{(0)}(s;\theta_0)}\\&& -  p_0^2  \int_{[0,t]}\left\{\mathbb{G}_n  f^{(1,0)}_s   \right\}\frac{\Lambda (ds;\theta_0)}{ e^{(0)}(s;\theta_0)}
\\&&+ r_n(t),
\end{eqnarray*}
with $\sup_{[0,\tau]}|r_n(t)| = o_{\mathbb{P}}(1),$ where
$$
f_t^{(2)} \in\mathcal{F}^{(2)} =\left\{ (x,a,{\bm z})\mapsto f_t^{(2)} (x,a,{\bm z}) =  \delta_0(a)\mathbf{1}(x\leq t)  \left[{ p_0 e^{(0)}(t;\theta_0) } \right]^{-1}  : t\in[0,\tau] \right\}.
$$
Clearly, $\mathcal{F}^{(2)}$ is a $P-$Donsker class of real-valued functions defined on $[0,\tau]\times \{0,1,2\}\times \bm{\mathcal{Z}}$.

\subsubsection{I.i.d. representation of $\sqrt{n}  
U_n(\theta_0 ,\tau) $}

We only present the guidelines that could be followed to deduce the asymptotic normality $\sqrt{n}  
U_n(\theta_0 ,\tau) $.
Consider the function $\phi :  \mathbb{R} \times \mathbb{R} \color{black} \times \mathbb{R}^q\times \mathbb{R}^q \mapsto \mathbb{R}^d$ given by the relationship
$$
\phi(y_1,y_2,y_3,y_4) = - \frac{y_3+p_0y_4}{y_1+p_0y_2}  - \frac{y_3 + y_4}{y_1+p_0y_2} +   \frac{y_1+y_2}{(y_1+p_0y_2)^2}(y_3+p_0y_4).
$$
Let $\Xi  $ be a set of $(3+3q)-$dimension vector valued functions of the observed variables
$$\eta = (\eta_1,\eta_2,\eta_3^\top,\eta_4^\top,\eta_5,\eta_6^\top):[0,\tau] \rightarrow  \mathbb{R}\times \mathbb{R}\times\mathbb{R}^q\times\mathbb{R}^q\times\mathbb{R}\times\mathbb{R}^q, $$ 
such that each component of  $\eta$ is a monotone c\`adl\`ag function bounded in absolute value by some 
sufficiently large constant $M$.  Moreover, we assume that the function 
$$
\eta_0 (x)= (Pf_x^{(0,0)}, Pf_x^{(1,0)}, (Pf_x^{(0,1)})^\top, (Pf_x^{(1,1)})^\top,\Lambda(x;\theta_0), (\partial_\beta \Lambda(x;\theta_0))^\top ),\qquad x\in[0,\tau],
$$
belongs to $\Xi$, and, with probability tending to 1 as $n\rightarrow \infty$, the empirical version 
$$
\eta_n (x) = (\mathbb{P}_nf_x^{(0,0)},  \mathbb{P}_n f_x^{(1,0)}, (\mathbb{P}_nf_x^{(0,1)})^\top, (\mathbb{P}_nf_x^{(1,1)})^\top,\Lambda_n(x;\theta_0), (\partial_\beta \Lambda_n (x;\theta_0))^\top ),\quad x\in[0, \tau],
$$
is also contained in $\Xi$. 
Let us define the family of functions 
$$
\mathcal{H} = \{ (x,a,z)\mapsto h_\eta (x,a,z): \eta \in \Xi \},
$$
where
\begin{multline*}
h_\eta (x,a,z) =\bigg[   \delta_0(a) \left\{ z + \phi(\eta_1(x),\eta_2(x), \eta_3(x),\eta_4(x))  \right\}   \\
\left. +\delta_2(a)\frac{\exp(-\exp(\beta_0^\top {\bm z})\eta_5(x))}{1-\exp(-\exp(\beta_0^\top {\bm z})\eta_5(x))} \{  z\eta_5(x) + \eta_6(x)\} \right]\mathbf{1}(x\leq \tau ).
\end{multline*}
Next, the idea is to decompose
$$
\sqrt{n}U_n (\theta_0, \tau) = \mathbb{G}_n h_{\eta_0} + \sqrt{n} P h_{\eta_n} + \mathbb{G}_n(h_{\eta_n}- h_{\eta_0}).
$$
By  the continuity of the paths of the empirical process, see for instance Theorem 2.1 of van der Vaart \& Wellner
(2007), 
$
 \mathbb{G}_n(h_{\eta_n}- h_{\eta_0}) = o_{\mathbb{P}}(1). 
$ 
The term  $ \mathbb{G}_n h_{\eta_0} $ is already under a convenient form and could be handled by standard CLT.


\begin{thebibliography}{9}

\bibitem{rb1}
\textsc{Braekers, R. \& Grouwels, Y.} (2015). 
A semi-parametric Cox's regression model for zero-inflated left-censored time to event data.
\textsl{Communications in Statistics -- Theory and Methods} \textbf{45(7)},  1969--1988. 

\bibitem{r1}
\textsc{Cox, D.R.} (1972). Regression models and life tables (with discussion). \textsl{J. Roy. Statist. Soc. Ser. B} \textbf{34},
187--220.

\bibitem{r13}
\textsc{Cox, D.R.}
(1975). Partial likelihood. \textsl{Biometrika} {\bf 62}, 269--276.
	
\bibitem{r2}
\textsc{Fang, H.B., Li, G., \& Sun,  J.} (2005). Maximum likelihood estimation in a semiparametric Logistic/proportional-hazards mixture model. \textsl{Scand. J.  Statist.} \textbf{32},
59--75.

\bibitem{r2ab}
\textsc{Gill, R.D.} (1994). \emph{Lectures on survival analysis}. Lectures on probability theory:
Ecole d'été de probabilités de Saint-Flour XXII. Lecture notes in mathematics 1581.
Springer.

\bibitem{r2b}
\textsc{Gill, R.D., Johansen, S.} (1990). A Survey of Product-Integration with a View Toward Application in Survival Analysis. \textsl{Ann. Statist.} \textbf{18(4)}, 1501--1555.

\bibitem{r21}
\textsc{Huang, J.} (1999). Asymptotic properties of nonparametric
estimation based on partly interval-censored data. \textsl{Statistica Sinica} \textbf{9}, 501--519.

\bibitem{r23}
\textsc{Kim, J.S.} (2003). Maximum likelihood estimation for
the proportional hazards models with partly interval-censored
data. \textsl{J. Royal Stat. Soc. B} \textbf{65}, 489--502.

\bibitem{r23b}
\textsc{Kim, Y., Kim, B., Jang, W.} (2010). Asymptotic properties of the maximum likelihood estimator for the
proportional hazards model with doubly censored data. \textsl{J.  Multivar. Anal.} \textbf{101}, 1339--1351.

\bibitem{r231}
\textsc{Kosorok, M.D.} (2008). \emph{Introduction to empirical process and semiparametric inference}. Springer Series in Statistics, Springer: New-York.

\bibitem{r232}
\textsc{Link, C.L.} (1984). Confidence intervals for the survival function using Cox's proportional-hazard model with covariates. \textsl{Biometrics} \textbf{40},  601--609.

\bibitem{r233}
\textsc{Murphy, S. A., \& A. W. van der Vaart} (2000). On profile likelihood. \textsl{J.  Amer. Statist. Assoc.} \textbf{95(450)}, 449--465.

\bibitem{r234}
\textsc{Patilea, V., \& Rolin, J.-M.} (2006a). Product-limit estimators of the survival function for two modified forms of current-status data.  \textsl{Bernoulli} \textbf{12}, 801--819.

\bibitem{r234b}
\textsc{Patilea, V., \& Rolin, J.-M.} (2006b). Product-limit estimators of the survival function with twice censored data.  \textsl{Ann. Statist.} \textbf{34}, 925--938.

\bibitem{235}
\textsc{Turnbull, B.W.} (1974). Nonparametric estimation of a survivorship function with doubly censored data.  
\emph{J. Amer. Statist. Assoc.} \textbf{69}, 169--173. 

\bibitem{r25x}
\textsc{van der Vaart, A.D.} (1998). \textsl{Asymptotic Statistics.} Cambridge Series in Statistical and Probabilistic Mathematics. Cambridge University Press, Cambridge. 

\bibitem{r26}
\textsc{van der Vaart, A.D.,  \& Wellner, J.A.} (1996). \textsl{Weak convergence and empirical processes.} Springer Series in Statistics. Springer-Verlag, New-York.

\bibitem{r26b}
\textsc{van der Vaart, A.D.,  \& Wellner, J.A.} (2007). Empirical processes indexed by
estimated functions. In \textsl{Asymptotics: Particles, Processes and Inverse Problems}, 
IMS Lecture Notes–Monograph Series, 
Vol. 55 (2007) 234--252.

\bibitem{r28}
\textsc{Zheng, D., Yin, G. \& Ibrahim, J.G.} (2006). Semiparametric Transformation Models
for Survival Data With a Cure Fraction. \emph{J. Amer. Statist. Assoc.} \textbf{101(474)}, 670--684.



\end{thebibliography}
\end{document}